\documentclass[11pt]{article}

\usepackage[utf8]{inputenc}
\usepackage[T1]{fontenc}
\usepackage{amsmath, amssymb}
\usepackage{graphicx}
\usepackage{hyperref}
\usepackage{geometry,xcolor}

\usepackage{amsthm}

\usepackage{amsmath, amssymb, amsthm}

\theoremstyle{plain}
\newtheorem{theorem}{Theorem}[section]
\newtheorem{lemma}{Lemma}[section]

\theoremstyle{definition}
\newtheorem{definition}{Definition}[section]

\theoremstyle{remark}

\numberwithin{equation}{section}
\begin{document}

\title{Green’s function for an initial–boundary value problem with the regularized Prabhakar fractional derivative}

\author{
Erkinjon Karimov\\
Department of Mathematics: Analysis, Logics and Discrete Mathematics, Ghent University,\\
Krijgslaan 297, S8, Ghent 9000, Belgium\\
\texttt{erkinjon.karimov@ugent.be}
\and
Doniyor Usmonov\\
Department of Mathematical Analysis and Differential Equations, Fergana State University,\\
Murabbiylar Str. 19, 150100 Fergana, Uzbekistan\\
\texttt{dusmonov909@gmail.com}
\and
Maftuna Mirzaeva\\
Department of Mathematical Analysis and Differential Equations, Fergana State University,\\
Murabbiylar Str. 19, 150100 Fergana, Uzbekistan\\
\texttt{maftunamirzayeva2009@gmail.com}
}

\date{\today}

\maketitle

\begin{abstract}
This paper studies the first initial–boundary value problem for a sub-diffusion equation involving the regularized Prabhakar fractional derivative. By the superposition method, the problem is reduced to two auxiliary problems. An explicit solution representation is obtained in terms of a Green’s function, which is expressed via a bivariate Mittag-Leffler-type function. Existence and uniqueness of a regular solution are proved.

\end{abstract}

\vspace{0.3cm}
\noindent\textbf{Keywords:} Green's function, Prabhakar derivative, regularized Prabhakar derivative, , bivariate Mittag-Leffler-type function, initial-boundary value problem.

{\bf MSC 2020: } 35R11, 35K57, 26A33.

\section{Introduction}
Fractional differential equations extend classical differential equations to non-integer orders, allowing the modeling of systems with memory and hereditary properties \cite{Podlubny}. They find applications across physics -- such as anomalous diffusion and viscoelasticity \cite{Suzuki} -- engineering (control systems and signal processing), biology (population dynamics), medicine (pharmacokinetics), and finance \cite{Kilbas}. By capturing long-range dependencies, fractional models often provide more accurate descriptions than their integer-order counterparts \cite{El-Sayed}.

Green’s functions offer integral representations for boundary value problems, while well-establi\-shed for classical equations, recent research has extended their use to fractional cases. For example, Green’s functions for time-fractional diffusion-wave equations have been developed using potential methods \cite{Pskhu}, and explicit forms for fractional partial differential equations have been obtained by Laplace and Fourier transform techniques \cite{Sarah}, \cite{Odibat}.

The Prabhakar fractional operator, based on the Mittag-Leffler function of three-parameters, generalizes both both Riemann-Liouville and Caputo derivatives \cite{Giusti}, \cite{Garra}. It provides a framework for modeling memory effects and has applications in physics, renewal processes, and numerical analysis. Recent advancements have seen the application of Mikusiński’s operational calculus to Prabhakar-type operators, providing a robust algebraic framework for finding explicit series solutions to linear FDEs \cite{Noosha}. The study of Prabhakar-type operators has also been extended to bivariate structures and general analytic kernels, allowing for a more nuanced modeling of memory effects across multiple variables \cite{Isah}. Furthermore, the structural properties and inequalities of Prabhakar-type calculus, including the introduction of the Hilfer-Prabhakar and $(k,s)$-generalized derivatives, have been extensively explored to broaden their applicability in mathematical physics \cite{Tom1}, \cite{Tom2}.

Recent studies have explored the existence and uniqueness of solutions for boundary value problems and diffusion-wave equations involving Prabhakar derivatives, highlighting their increasing significance in fractional calculus \cite{Al‑Refai} (maximum principles), \cite{Asjad} (application in transport phenomena), \cite{Magar} (generalization in a sense of $\Psi$-fractional calculus), \cite{Karimov} (initial-boundary problem for sub-diffusion equation in a bounded domain), \cite{jonibek} (initial-boundary-value problem in star graph), \cite{Turdiev 1} (initial-boundary problem for fractional generalization of hyperbolic-type equation), \cite{Kerbal} (Katugampola-Prabhakar generalization), \cite{Mamanazarov} (inverse problems for time and space degenerate equation), \cite{Karimov2} (inverse problem for sub-diffusion equation with the Hilfer-Prabhakar FD), \cite{Usmonov} (Cauchy problem for sub-diffusion equation with the Prabhakar FD), \cite{WMT} (distributed order diffusion with Hilfer-Prabhakar fractional derivative).

In \cite{Karimov3}, a first boundary value problem with the Prabhakar fractional derivative was studied, where the Green’s function was constructed using a Volterra-type integral equation, leading to an explicit solution representation and proofs of existence and uniqueness. Based on these results, the present work investigates the first boundary value problem for a sub-diffusion equation involving the regularized fractional derivative of Prabhakar, derives an explicit solution and the corresponding Green’s function expressed in terms of a bivariate Mittag–Leffler-type function, and provides a basis for further effective developments in this direction.

\section{Main result} \setcounter{equation}{0}\label{sec:2}

Let $\Omega$ be a rectangular domain: $\Omega=\left\{ \left( t,x \right):\,\,0<t<T,\,\,0<x<a \right\},$ $0<a,T<\infty .$ 
We formulate the first initial-boundary value problem for the following sub-diffusion equation:
\begin{equation}\label{eq2.1}
    \mathop{Lu}={}^{PC}D_{0t}^{\alpha ,\,\beta ,\,\gamma ,\,\delta }u\left( t,x \right)-{{u}_{xx}}\left( t,x \right)=f\left( t,x \right).
\end{equation}
Here $f\left( t,x \right)$ is a given function, $\alpha,\beta,\gamma,\delta$ are given real numbers such that $\alpha>0,$ $0<\beta \le 1$,
$${ }^{PC} D_{0 t}^{\alpha, \beta, \gamma, \delta} g(t)={}^{P}I_{0 t}^{\alpha, 1-\beta,-\gamma, \delta} {\frac{d^{}}{d t}}g(t),\,\, t>0
$$ is the Prabhakar fractional derivative of Caputo type (regularized Prabhakar derivative) \cite{Garra},
$${ }^P I_{0 t}^{\alpha, \beta, \gamma, \delta} g(t)=\int\limits_0^t(t-s)^{\beta-1} E_{\alpha, \beta}^\gamma\left[\mathcal{\delta}(t-s)^\alpha\right] g(s)ds$$ is the Prabhakar fractional integral operator, whereas
$$
E_{\alpha, \beta}^\gamma[z]=\sum\limits_{k=0}^{+\infty} \frac{(\gamma)_k z^k}{\Gamma(\alpha k+\beta) k!}
$$
represents a generalized Mittag-Leffler (Prabhakar) function \cite{Prabhakar}. 

\begin{definition}
    A regular solution of the equation \eqref{eq2.1} in the domain $\Omega$ is called a function $u\left( t,x \right)$ with the regularity 
    $$u\left( t,x \right)\in C\left( \overline{\Omega} \right),\,\,{{u}_{xx}}\left( t,x \right),\, {}^{PC}D_{0t}^{\alpha ,\,\beta ,\,\gamma ,\,\delta }u\left( t,x \right)\in C\left( \Omega \right)$$ that satisfies the equation \eqref{eq2.1} at all points $\left( t,x \right)\in \Omega.$
\end{definition}

\textbf{Problem.} Find a regular solution $u\left( t,x \right)$ of the equation \eqref{eq2.1} in the domain $\Omega$, satisfying the following boundary and initial conditions:
\begin{equation}\label{eq2.2}
    u\left( t,0 \right)={{\varphi }_{0}}\left( t \right),\,\,\,   u\left( t,a \right)={{\varphi }_{1}}\left( t \right), \,\,\,   0\le t\le T,	
\end{equation}
\begin{equation}\label{eq2.3}
    u\left( 0,x \right)=\tau \left( x \right),\,\,\,  0\le x\le a,			 
\end{equation}
where ${{\varphi }_{0}}\left( t \right),$ ${{\varphi }_{1}}\left( t \right),$ $\tau \left( x \right)$ are given smooth functions.

\begin{theorem} 
 Let\, ${{\varphi }_{0}}\left( t \right),$ ${{\varphi }_{1}}\left( t \right)\in C\left[ 0;T \right],$ $\tau \left( x \right)\in C\left[ 0;a \right],$ ${{t}^{1-\beta }}f\left( t,x \right)\in C\left( \overline{\Omega} \right)$ and $f\left( t,x \right)$ also satisfies the H{\"o}lder condition with respect to $x$, and the conditions
 $${{\varphi }_{0}}\left( 0 \right)=\tau \left( 0 \right),\,\,\, {{\varphi }_{1}}\left( 0 \right)=\tau \left( a \right).$$
admits a unique regular solution given by 
$$u\left( t,x \right)=\int\limits_{0}^{t}{{{\varphi }_{0}}\left( \eta  \right){{G}_{\xi}}\left( t,x,\eta ,0 \right)d\eta }-\int\limits_{0}^{t}{{{\varphi }_{1}}\left( \eta  \right){{G}_{\xi}}\left( t,x,\eta ,a \right)d\eta }+$$
\begin{equation}\label{eq2.4}
    +\,\int\limits_{0}^{a}{\tau \left( \xi \right)\widetilde G(t,x,0,\xi)d\xi}+\int\limits_{0}^{t}{\int\limits_{0}^{a}{f\left( \eta ,\xi \right) G\left( t,x,\eta ,\xi \right)d\xi d\eta }}.
\end{equation}
\end{theorem}
Here 
$$G\left( t,x,\eta ,\xi \right)=\frac{{{\left( t-\eta  \right)}^{{{\beta }_{1}}-1}}}{2}\sum\limits_{n=-\infty }^{\infty }{\left[ {{E}_{12}}\left( \left. \begin{matrix}
   -{{\gamma }_{1}},1,{{\gamma }_{1}};\,\,\,\,\,\,\,\,\,\,\,\,\,\,\,\,\,\,\,\,\,\,\,\,\,\,\,  \\
   -{{\beta }_{1}},\alpha ,{{\beta }_{1}};-{{\gamma }_{1}},{{\gamma }_{1}};1,1;1,1  \\
\end{matrix} \right|\begin{matrix}
   -\left| x-\xi+2an \right|{{\left( t-\eta  \right)}^{-{{\beta }_{1}}}}  \\
   \delta {{\left( t-\eta  \right)}^{\alpha }}  \\
\end{matrix} \right) \right.}-$$
\begin{equation}\label{eq2.5}
    \left. -{{E}_{12}}\left( \left. \begin{matrix}
   -{{\gamma }_{1}},1,{{\gamma }_{1}};\,\,\,\,\,\,\,\,\,\,\,\,\,\,\,\,\,\,\,\,\,\,\,\,\,\,\,  \\
   -{{\beta }_{1}},\alpha ,{{\beta }_{1}};-{{\gamma }_{1}},{{\gamma }_{1}};1,1;1,1  \\
\end{matrix} \right|\begin{matrix}
   -\left| x+\xi+2an \right|{{\left( t-\eta  \right)}^{-{{\beta }_{1}}}}  \\
   \delta {{\left( t-\eta  \right)}^{\alpha }}  \\
\end{matrix} \right) \right],
\end{equation}
\begin{equation}\label{eq2.6}
    \widetilde G(t,x,0,\xi)=\int\limits_{0}^{t}\eta^{-\beta}E_{\alpha, 1-\beta}^{-\gamma}[\delta \eta^\alpha]{G\left( t,x,\eta ,\xi \right) d\eta }.
\end{equation}
$${{E}_{12}}\left( \left. \begin{matrix}
   {{\alpha }_{1}},{{\widetilde\beta }_{1}},{{\delta }_{1}};\,\,\,\,\,\,\,\,\,\,\,\,\,\,\,\,\,\,\,\,\,\,\,\,\,\,\,\,\,\,\,\,\,\,\,\,\,\,\,\,\,  \\
   {{\alpha }_{2}},{{\widetilde\beta }_{2}},{{\delta }_{2}};{{\alpha }_{3}},{{\delta }_{3}};{{\alpha }_{4}},{{\delta }_{4}};{{\widetilde\beta }_{3}},{{\delta }_{5}}  \\
\end{matrix} \right|\begin{matrix}
   x  \\
   y  \\
\end{matrix} \right)=$$
$$=\sum\limits_{n=0}^{+\infty }{\sum\limits_{m=0}^{+\infty }{\frac{\Gamma \left( {{\alpha }_{1}}n+{{\widetilde\beta }_{1}}m+{{\delta }_{1}} \right){{x}^{n}}{{y}^{m}}}{\Gamma \left( {{\alpha }_{2}}n+{{\widetilde\beta }_{2}}m+{{\delta }_{2}} \right)\Gamma \left( {{\alpha }_{3}}n+{{\delta }_{3}} \right)\Gamma \left( {{\alpha }_{4}}n+{{\delta }_{4}} \right)\Gamma \left( {{\widetilde\beta }_{3}}m+{{\delta }_{5}} \right)}}},$$
$$\left( x,y,{{\alpha }_{l}},{{\widetilde\beta }_{i}},{{\delta }_{j}}\in \mathbb{R};\min \left\{ {{\alpha }_{l}},{{\widetilde\beta }_{i}} \right\}>0;\left( l=\left\{ 1,...,4 \right\},i=\left\{ 1,2,3 \right\}, j=\left\{ 1,...,5 \right\} \right) \right),$$
in which the double series converges for $x,y\in \mathbb{R}$, if ${{\Delta }_{1}}>0$, and ${{\Delta }_{2}}>0$, whereas ${{\Delta }_{1}}={{\alpha }_{2}}+{{\alpha }_{3}}+{{\alpha }_{4}}-{{\alpha }_{1}}$, ${{\Delta }_{2}}={{\widetilde\beta }_{2}}+{{\widetilde\beta }_{3}}-{{\widetilde\beta }_{1}}$, ${{\beta }_{1}}=\frac{\beta }{2},\,\,\,{{\gamma }_{1}}=\frac{\gamma }{2}$ \cite{Turdiev 2}.

\textit{\bf Proof:}

{\bf Step 1: Decomposition.} 
We search the solution of problem \eqref{eq2.1}-\eqref{eq2.2}-\eqref{eq2.3} in the following form:
\begin{equation}\label{eq2.7}
    u(t,x)=y(t,x)+z(t,x).
\end{equation}
Here $y(t,x)$ is a solution of the nonhomogeneous equation
\begin{equation}\label{eq2.8}
    {}^{PC}D_{0t}^{\alpha ,\,\beta ,\,\gamma ,\,\delta }y\left( t,x \right)-{{y}_{xx}}\left( t,x \right)=f\left( t,x \right)
\end{equation}
that satisfies the nonhomogeneous boundary conditions
\begin{equation}\label{eq2.9}
    y\left( t,0 \right)={{\varphi }_{0}}\left( t \right),\,\,\,   y\left( t,a \right)={{\varphi }_{1}}\left( t \right), \,\,\,   0\le t\le T,	
\end{equation}
and the homogeneous initial condition
    \begin{equation}\label{eq2.10}
    y(0,x)=0,\,\,\,\,\, 0\le x \le a,
    \end{equation}
while $z(t,x)$ is a solution to the homogeneous equation 
\begin{equation}\label{eq2.11}
    {}^{PC}D_{0t}^{\alpha ,\,\beta ,\,\gamma ,\,\delta }z\left( t,x \right)-{{z}_{xx}}\left( t,x \right)=0
\end{equation}
that satisfies these homogeneous boundary conditions
\begin{equation}\label{eq2.12}
    z\left( t,0 \right)=0,\,\,\,   z\left( t,a \right)=0, \,\,\,   0\le t\le T	
\end{equation}
and the nonhomogeneous initial condition 
\begin{equation}\label{eq2.13}
    z\left( 0,x \right)=\tau \left( x \right),\,\,\,\,\,  0\le x\le a.	 
\end{equation}

{\bf Step 2: Solution of auxiliary problem.} First, we solve the problem \eqref{eq2.8}-\eqref{eq2.9}-\eqref{eq2.10}.
For this purpose, we use the following formula, which holds between the Prabhakar derivative of Riemann-Liouville type and the Prabhakar derivative of Caputo type for any function $g\in A{{C}^{m}}\left( a,b \right)$ \cite{Fernandez}:
\begin{equation}\label{2.14}
    {}^{PC}D_{at}^{\alpha ,\beta ,\gamma ,\delta }g\left( t \right)={}^{PRL}D_{at}^{\alpha ,\beta ,\gamma ,\delta }\left[ g\left( t \right)-\sum\limits_{j=0}^{m-1}{\frac{{{g}^{\left( j \right)}}\left( a \right)}{j!}{{\left( t-a \right)}^{j}}} \right],
\end{equation}
where 
$${ }^{PRL} D_{0 t}^{\alpha, \beta, \gamma, \delta} g(t)={\frac{d^{m}}{d t^m}}{}^{P}I_{0 t}^{\alpha, m-\beta,-\gamma, \delta} g(t),\,\,\, t>0$$
is the Prabhakar derivative of Riemann-Liouville type \cite{Prabhakar},
$\alpha ,\beta ,\delta ,\gamma \in \mathbb{C}$ with $\operatorname{Re}\left( \alpha  \right)>0,$ $\operatorname{Re}\left( \beta  \right)\ge 0$ and $m=\left[ \operatorname{Re}\left( \beta  \right) \right]+1.$

In our case, $m=\left[ \operatorname{Re}\left( {{\beta }} \right) \right]+1=1,$ $y( 0,x)=0.$ Hence, the following equality is true:
$${}^{PC}D_{0t}^{\alpha ,\,\beta ,\,\gamma ,\,\delta }y\left( t,x \right)={}^{PRL}D_{0t}^{\alpha ,\,\beta ,\,\gamma ,\,\delta }y\left( t,x \right).$$
Thus, the equation \eqref{eq2.8} can be written as follows:
\begin{equation}\label{eq2.15}
    {}^{PRL}D_{0t}^{\alpha ,\,\beta ,\,\gamma ,\,\delta }y\left( t,x \right)-{{y}_{xx}}\left( t,x \right)=f\left( t,x \right).
\end{equation}

Now let us present the following statement:
\begin{lemma}
    If $g\left( t,x \right)\in C\left( \overline{\Omega} \right),$ then 
     $\underset{t\to 0}{\mathop{\lim }}\,I_{0t}^{\alpha ,\,1-\beta,\,-\gamma,\,\delta }g\left( t,x \right)=0.$
\end{lemma}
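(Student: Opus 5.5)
The argument is a direct estimate of the kernel of the Prabhakar integral. By definition,
$$I_{0t}^{\alpha,1-\beta,-\gamma,\delta}g(t,x)=\int\limits_0^t (t-s)^{-\beta}E_{\alpha,1-\beta}^{-\gamma}\left[\delta (t-s)^\alpha\right]g(s,x)\,ds .$$
We bound $g$ and the Mittag-Leffler factor uniformly and then integrate the remaining power.

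First, since $g\in C(\overline{\Omega})$ and $\overline{\Omega}$ is compact, there is $M$ with $|g(s,x)|\le M$ for all $(s,x)\in\overline{\Omega}$.

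Second, the series $E^{-\gamma}_{\alpha,1-\beta}[z]$ is an entire function of $z$, because $\alpha>0$. Hence it is bounded on the compact set $\{\delta r^\alpha:0\le r\le T\}$ by some constant $K$ depending only on $\alpha,\beta,\gamma,\delta,T$. In the boundary case $\beta=1$ the $k=0$ term $1/\Gamma(0)$ vanishes. Every other term $k\ge1$ has a finite denominator $\Gamma(\alpha k)$, so boundedness still holds.

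Consequently, for $0<\beta<1$,
$$\left|I_{0t}^{\alpha,1-\beta,-\gamma,\delta}g(t,x)\right|\le MK\int\limits_0^t (t-s)^{-\beta}ds=\frac{MK}{1-\beta}\,t^{1-\beta}\to0$$
as $t\to0$, uniformly in $x\in[0,a]$.

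The main care point is $\beta=1$, where $(t-s)^{-1}$ is not integrable. I would handle it by using the vanishing of the $k=0$ term: $E^{-\gamma}_{\alpha,0}[z]=z\sum_{k\ge1}\frac{(-\gamma)_k z^{k-1}}{\Gamma(\alpha k)k!}$. The kernel is then $(t-s)^{-1}\cdot\delta (t-s)^\alpha\cdot(\text{bounded})$, which is $O((t-s)^{\alpha-1})$. This is integrable, and the same argument gives the bound $C\,t^{\alpha}\to0$. (If $\gamma=0$ the kernel vanishes identically in this case, and the claim is trivial.)

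Putting the two cases together, the limit is zero, uniformly in $x$, which is the statement of the lemma.
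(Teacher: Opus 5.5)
Your proof is correct. It follows the same outline as the paper: bound $g$ by its maximum, then show that the integral of the kernel tends to zero. The difference is in how the kernel is handled.

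\textbf{The paper's route.} It integrates the series term by term to obtain the closed form $\int_0^t (t-z)^{-\beta}E^{-\gamma}_{\alpha,1-\beta}[\delta(t-z)^\alpha]\,dz=t^{1-\beta}E^{-\gamma}_{\alpha,2-\beta}(\delta t^\alpha)$, and then concludes ``since $0<\beta\le 1$''. It also puts the absolute value outside the integral. That step tacitly uses the fact that the kernel keeps one sign for small $t$.

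\textbf{Your route.} You bound $|E^{-\gamma}_{\alpha,1-\beta}|\le K$ and keep the absolute value inside the integral. This is cruder than an exact formula, but it needs no sign consideration.

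\textbf{The endpoint $\beta=1$.} Your separate treatment of this case is genuinely needed. There the paper's closed form becomes $E^{-\gamma}_{\alpha,1}(\delta t^\alpha)\to 1$, not $0$. The reason is that the $k=0$ term was evaluated as $t^0/\Gamma(1)=1$, although its coefficient $1/\Gamma(0)$ is zero. The correct value of the kernel integral is $E^{-\gamma}_{\alpha,1}(\delta t^\alpha)-1\to 0$, which agrees with your $O(t^\alpha)$ bound. So your argument covers the whole stated range $0<\beta\le 1$, and the paper's argument does not.

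This rests on reading the kernel literally at $\beta=1$, as the lemma's definition does. Under the usual convention that the order-zero Prabhakar integral contains an identity part, the limit would instead be $g(0,x)$.
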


\textit{\bf Proof.}  
Let us calculate the following expression:
$$\underset{t\to 0}{\mathop{\lim }}\,I_{0t}^{\alpha ,\,1-\beta,\,-\gamma,\,\delta }g\left( t,x \right)=\underset{t\to 0}{\mathop{\lim }}\,\int\limits_{0}^{t}{{{\left( t-z \right)}^{-\beta}}}E_{\alpha ,1-\beta}^{-\gamma}\left[ \delta {{\left( t-z \right)}^{\alpha }} \right]g\left( z,x \right)dz.$$
Since $g\left( t,x \right)\in C\left( \overline{\Omega} \right),$ it follows that the expression $g\left( t,x \right)$ is bounded. We define that
$$\underset{t\to 0}{\mathop{\lim }}\,\left|\int\limits_{0}^{t}{{{\left( t-z \right)}^{-\beta}}}E_{\alpha ,1-\beta}^{-\gamma}\left[ \delta {{\left( t-z \right)}^{\alpha }} \right]g\left( z,x \right)dz\right|\le $$
$$\le \left\| g\left( z,x \right) \right\|\underset{t\to 0}{\mathop{\lim }}\,\left|\int\limits_{0}^{t}{{{\left( t-z \right)}^{-\beta}}}E_{\alpha ,1-\beta}^{-\gamma}\left[ \delta {{\left( t-z \right)}^{\alpha }} \right]dz\right|=$$
$$=\left\| g\left( z,x \right) \right\|\underset{t\to 0}{\mathop{\lim }}\left|\sum\limits_{k=0}^{+\infty }{\frac{{{\left( -\gamma \right)}_{k}}{{\delta }^{k}}}{k!\Gamma \left( \alpha k-\beta+1 \right)}}\,\int\limits_{0}^{t}{{{\left( t-z \right)}^{\alpha k-\beta}}}dz\right|=$$
$$=\left\| g\left( z,x \right) \right\|\underset{t\to 0}{\mathop{\lim }}\left|\sum\limits_{k=0}^{+\infty }{\frac{{{\left( -\gamma \right)}_{k}}{{\delta }^{k}}}{k!\Gamma \left( \alpha k-\beta+2 \right)}}{{t}^{\alpha k-\beta+1}}\right|=\left\| g\left( z,x \right) \right\|\underset{t\to 0}{\mathop{\lim }}\,t^{1-\beta}E_{\alpha, 2-\beta}^{-\gamma}(\delta t^\alpha).$$
Since $0<\beta \le 1$, we obtain $\underset{t\to 0}{\mathop{\lim }}\,I_{0t}^{\alpha ,\,1-\beta,\,-\gamma,\,\delta }g\left( t,x \right)=0.$

Lemma 2.1 is proved.

{\bf Step 3: Green function construction.}
According to Lemma 2.1, we obtain the following initial condition for the equation \eqref{eq2.15}:
\begin{equation}\label{eq2.16}
    \underset{t\to 0}{\mathop{\lim }}\,I_{0t}^{\alpha ,\,1-\beta,\,-\gamma,\,\delta }y\left( t,x \right)=0.
\end{equation}
The solution of the problem \eqref{eq2.15}-\eqref{eq2.9}-\eqref{eq2.16} is expressed as follows (see \cite{Karimov3}):
\begin{equation}\label{eq2.17}
    y\left( t,x \right)=\int\limits_{0}^{t}{{{\varphi }_{0}}\left( \eta  \right){{G}_{\xi}}\left( t,x,\eta ,0 \right)d\eta }-\int\limits_{0}^{t}{{{\varphi }_{1}}\left( \eta  \right){{G}_{\xi}}\left( t,x,\eta ,a \right)d\eta }+\int\limits_{0}^{t}{\int\limits_{0}^{a}{f\left( \eta ,\xi \right)G\left( t,x,\eta ,\xi \right)d\xi d\eta }}.
\end{equation}

Now, we consider the problem \eqref{eq2.11}-\eqref{eq2.12}-\eqref{eq2.13}. Applying \eqref{2.14} for the function $z(t,x)$, we get
$${}^{PC}D_{0t}^{\alpha ,\,\beta ,\,\gamma ,\,\delta }z\left( t,x \right)={}^{PRL}D_{0t}^{\alpha ,\,\beta ,\,\gamma ,\,\delta }z\left( t,x \right)-\tau(x)t^{-\beta}E_{\alpha, 1-\beta}^{-\gamma}[\delta t^\alpha].$$
Using the last equality, we rewrite equation \eqref{eq2.11} and obtain nonhomogeneous equation 
\begin{equation}\label{eq2.18}
    {}^{PRL}D_{0t}^{\alpha ,\,\beta ,\,\gamma ,\,\delta }z\left( t,x \right)-{{z}_{xx}}\left( t,x \right)=\tau(x)t^{-\beta}E_{\alpha, 1-\beta}^{-\gamma}[\delta t^\alpha].
\end{equation}
Using Lemma 2.1, we determine the initial condition associated with the equation \eqref{eq2.18}:
\begin{equation}\label{eq2.19}
    \underset{t\to 0}{\mathop{\lim }}\,I_{0t}^{\alpha ,\,1-\beta,\,-\gamma,\,\delta }z\left( t,x \right)=0.
\end{equation}
Based on \cite{Karimov3}, the solution to problem \eqref{eq2.18}-\eqref{eq2.19}-\eqref{eq2.12} can be written as follows:
$$z(t,x)=\int\limits_{0}^{t}{\int\limits_{0}^{a}\tau(\xi)\eta^{-\beta}E_{\alpha, 1-\beta}^{-\gamma}[\delta \eta^\alpha]{G\left( t,x,\eta ,\xi \right)d\xi d\eta }}.$$
By introducing the substitution \eqref{eq2.6}, we can express $z(t,x)$ as follows:
\begin{equation}\label{eq2.20}
    z(t,x)=\int\limits_{0}^{a}\tau(\xi)\widetilde G(t,x,0,\xi)\,d\xi.
\end{equation}
Substituting the solutions \eqref{eq2.20} and \eqref{eq2.17} into  \eqref{eq2.7}, we obtain the solution \eqref{eq2.4}.

{\bf Step 4: Verification.} Now we prove that the function $u(t,x)$, represented by formula \eqref{eq2.4}, is indeed the solution we sought.

Let us introduce the following notation:
$$u_0(t,x)=u(t,x)-u_1(t,x),\,\,\,u_1(t,x)=\int\limits_0^t\int\limits_0^af(\eta,\xi)G(t,x,\eta,\xi)d\xi d\eta.$$

First, we show that $\mathop{Lu_0=0}.$ Let us begin by calculating the term involving the function $\varphi_0(\eta):$
$${}^{PC}D_{0t}^{\alpha ,\,\beta ,\,\gamma ,\,\delta }\left(\int\limits_{0}^{t}{{{\varphi }_{0}}\left( \eta  \right){{G}_{\xi}}\left( t,x,\eta ,0 \right)d\eta }\right)={}^{PRL}D_{0t}^{\alpha ,\,\beta ,\,\gamma ,\,\delta }\left(\int\limits_{0}^{t}{{{\varphi }_{0}}\left( \eta  \right){{G}_{\xi}}\left( t,x,\eta ,0 \right)d\eta }\right)=$$
$$={\frac{\partial^{}}{\partial t}}{}^{P}I_{0 t}^{\alpha, 1-\beta,-\gamma, \delta} \left(\int\limits_{0}^{t}{{{\varphi }_{0}}\left( \eta  \right){{G}_{\xi}}\left( t,x,\eta ,0 \right)d\eta }\right)=$$
$$=\frac{\partial}{\partial t}\int\limits_0^t(t-s)^{-\beta} E_{\alpha, 1-\beta}^{-\gamma}\left[\mathcal{\delta}(t-s)^\alpha\right]ds\int\limits_{0}^{s}{{{\varphi }_{0}}\left( \eta  \right){{G}_{\xi}}\left( s,x,\eta ,0 \right)d\eta }=$$
$$=\frac{\partial}{\partial t}\int\limits_0^t{{\varphi }_{0}}\left( \eta  \right)d\eta\int\limits_{\eta}^{t}(t-s)^{-\beta} E_{\alpha, 1-\beta}^{-\gamma}\left[\mathcal{\delta}(t-s)^\alpha\right]{{{G}_{\xi}}\left( s,x,\eta ,0 \right)ds, }$$
where \cite{Karimov3}
$${{G}_{\xi}}\left( t,x,\eta ,0 \right)=\sum_{n=-\infty}^{+\infty}sign(x+2na)\omega(t-\eta,\left|x+2na\right|),$$

$$\omega \left( t,x \right)=\sum\limits_{n=0}^{+\infty }{\frac{{{\left( -1 \right)}^{n}}{{x}^{n}}}{n!}{{t}^{-{{\beta }_{1}}n-1}}E_{\alpha ,-{{\beta }_{1}}n}^{-{{\gamma }_{1}}n}\left[ \delta {{t}^{\alpha }} \right]}={{t}^{-1}}{{E}_{12}}\left( \left. \begin{matrix}
   -{{\gamma }_{1}},1,0;\,\,\,\,\,\,\,\,\,\,\,\,\,\,\,\,\,\,\,\,\,\,\,\,\,\,\,  \\
   -{{\beta }_{1}},\alpha ,0;-{{\gamma }_{1}},0;1,1;1,1  \\
\end{matrix} \right|\begin{matrix}
   -x{{t}^{-{{\beta }_{1}}}}  \\
   \delta {{t}^{\alpha }}  \\
\end{matrix} \right).$$

After certain evaluations (see Appendix A1), we obtain
$${}^{PC}D_{0t}^{\alpha ,\,\beta ,\,\gamma ,\,\delta }\left(\int\limits_{0}^{t}{{{\varphi }_{0}}\left( \eta  \right){{G}_{\xi}}\left( t,x,\eta ,0 \right)d\eta }\right)=\int\limits_0^t{{\varphi }_{0}}\left( \eta  \right)\sum_{n=-\infty}^{+\infty}sign(x+2na)\times$$
\begin{equation}\label{eq2.23}
    \times\sum_{k=0}^{+\infty}\frac{(-1)^k\left|x+2na\right|^k(t-\eta)^{-\beta_1k-\beta-1}}{k!}E_{\alpha, -\beta_1k-\beta}^{-\gamma-\gamma_1k}\left[\delta(t-\eta)^{\alpha }\right]d\eta.
\end{equation}

Now we compute the second-order partial derivative with respect to 
$x$ of the term involving $\varphi_0(\eta):$
\begin{equation}\label{eq2.24}
    \frac{\partial^2}{\partial x^2}\left(\int\limits_{0}^{t}{{{\varphi }_{0}}\left( \eta  \right){{G}_{\xi}}\left( t,x,\eta ,0 \right)d\eta }\right)=\int\limits_{0}^{t}{{{\varphi }_{0}}\left( \eta  \right)\frac{\partial^2}{\partial x^2}{{G}_{\xi}}\left( t,x,\eta ,0 \right)d\eta };
\end{equation}
After certain evaluations (see Appendix A2) we have
$$\frac{\partial^2}{\partial x^2}{{G}_{\xi}}\left( t,x,\eta ,0 \right)=\sum_{n=-\infty}^{+\infty}sign(x+2na)\sum_{k=0}^{+\infty}\frac{(-1)^k\left|x+2na\right|^k(t-\eta)^{-\beta_1k-\beta-1}}{k!}E_{\alpha, -\beta_1k-\beta}^{-\gamma-\gamma_1k}\left[\delta(t-\eta)^{\alpha }\right].$$
If we substitute this into \eqref{eq2.24}, we see that \eqref{eq2.23} and \eqref{eq2.24} are equal. The term containing $\varphi_1(\eta)$ can be demonstrated analogously to the term containing $\varphi_0(\eta)$. 

Hence, we proceed to the term involving $\tau(\xi):$
$${}^{PC}D_{0t}^{\alpha ,\,\beta ,\,\gamma ,\,\delta }\left(\int\limits_{0}^{a}{\tau \left( \xi \right)\widetilde G\left( t,x,0,\xi \right)d\xi}\right)={}^{P}I_{0 t}^{\alpha, 1-\beta,-\gamma, \delta}\left(\int\limits_{0}^{a}{\tau \left( \xi \right)\widetilde G_t\left( t,x,0,\xi \right)d\xi}\right)=$$
$$=\int\limits_0^t(t-s)^{-\beta} E_{\alpha, 1-\beta}^{-\gamma}\left[\mathcal{\delta}(t-s)^\alpha\right]ds\int\limits_{0}^{a}{\tau \left( \xi \right)\widetilde G_s\left( s,x,0,\xi \right)d\xi}=$$
$$
   =\int\limits_0^a\tau \left( \xi \right)\,d\xi\int\limits_{0}^{t}{(t-s)^{-\beta} E_{\alpha, 1-\beta}^{-\gamma}\left[\mathcal{\delta}(t-s)^\alpha\right]\widetilde G_s\left( s,x,0,\xi \right)ds}. 
$$

After certain evaluations (see Appendix A3), we determine that 
$${}^{PC}D_{0t}^{\alpha ,\,\beta ,\,\gamma ,\,\delta }\left(\int\limits_{0}^{a}{\tau \left( \xi \right)\widetilde G\left( t,x,0,\xi \right)d\xi}\right)=\int\limits_{0}^{a}\tau(\xi)\times$$
\begin{equation}\label{eq2.25}
    \times\sum\limits_{n=-\infty }^{+\infty }{\sum\limits_{k=0}^{+\infty }{\frac{{{\left( -1 \right)}^{k}}\left[ {{\left| x-\xi+2na \right|}^{k}}-{{\left| x+\xi+2na \right|}^{k}} \right]}{2\Gamma(k+1)}}}t^{-\beta-\beta_1-\beta_{1}k}E_{\alpha, {1-\beta_1k-\beta_1-\beta}}^{-\gamma-\gamma _{1}-{{\gamma }_{1}}k}\left[\delta t^\alpha\right]d\xi.
\end{equation}

Now we compute the second-order partial derivative with respect to $x:$
\begin{equation}\label{eq2.26}
    \frac{\partial^2}{\partial x^2}\left(\int\limits_{0}^{a}{\tau \left( \xi \right)\widetilde G\left( t,x,0,\xi \right)d\xi}\right)=\int\limits_{0}^{a}{\tau \left( \xi \right)\frac{\partial^2}{\partial x^2}\widetilde G\left( t,x,0,\xi \right)d\xi};
\end{equation}
After certain evaluations (see Appendix A4), we will find that 
$$\frac{\partial^2}{\partial x^2}\widetilde G\left( t,x,0,\xi \right)=\sum\limits_{n=-\infty }^{+\infty }{\sum\limits_{k=0}^{+\infty }{\frac{{{\left( -1 \right)}^{k}}\left[ {{\left| x-\xi+2na \right|}^{k}}-{{\left| x+\xi+2na \right|}^{k}} \right]}{2\Gamma(k+1)}}}t^{-\beta-\beta_1-\beta_{1}k}E_{\alpha, {1-\beta_1k-\beta_1-\beta}}^{-\gamma-\gamma _{1}-{{\gamma }_{1}}k}\left[\delta t^\alpha\right].$$
Upon substituting the obtained result into \eqref{eq2.26} and comparing it with \eqref{eq2.25}, it becomes clear that both expressions are equal. We have proven that $\mathop{Lu_0=0}.$

To verify that $u_1(t,x)$ satisfies the equation \eqref{eq2.1}, we use the following relation:
$$ \mathop{Lu_1}=\mathop{L\int\limits_0^a\int\limits_0^t v(t,x,\eta,\xi)f(\eta,\xi) d\eta \, d\xi},$$
where \cite{Usmonov}
$$v\left( t,x;\eta ,\xi  \right)=\frac{{{\left( t-\eta  \right)}^{{{\beta }_{1}}-1}}}{2}{{E}_{12}}\left( \left. \begin{matrix}
   -{{\gamma }_{1}},1,{{\gamma }_{1}};\,\,\,\,\,\,\,\,\,\,\,\,\,\,\,\,\,\,\,\,\,\,\,\,\,\,\,  \\
   -{{\beta }_{1}},\alpha ,{{\beta }_{1}};-{{\gamma }_{1}},{{\gamma }_{1}};1,1;1,1;  \\
\end{matrix} \right|\begin{matrix}
   \left( -\left| x-\xi  \right| \right){{\left( t-\eta  \right)}^{-{{\beta }_{1}}}}  \\
   \delta {{\left( t-\eta  \right)}^{\alpha }}  \\
\end{matrix} \right).$$
In \cite{Usmonov}, the Cauchy problem was investigated for an equation similar to the equation \eqref{eq2.1}. It was proved there that the obtained solution satisfies the corresponding equation. Following the same approach as in \cite{Usmonov}, one can verify that $\mathop{Lu_1=f}.$

Next, we verify that the solution \eqref{eq2.4} satisfies the boundary conditions \eqref{eq2.2}. For this purpose, we let $x \to 0$ in \eqref{eq2.4}. From \eqref{eq2.5}, it follows that 
$$\lim _{x \to 0}{G}\left( t,x,\eta ,\xi \right)=0, \,\,\,\,\,\,\lim _{x \to 0}{G}_{\xi}\left( t,x,\eta ,a \right)=0.$$
According to \eqref{eq2.6}, we determine 
$$\lim _{x \to 0}\widetilde G\left( t,x,0,\xi \right)=0.$$
As a result, we get
$$\lim _{x \to 0}u\left( t,x \right)=\lim _{x \to 0}\int\limits_{0}^{t}{{{\varphi }_{0}}\left( \eta  \right){{G}_{\xi}}\left( t,x,\eta ,0 \right)d\eta }.$$
Now, let us evaluate this equality directly:
$$\lim _{x \to 0}\int\limits_{0}^{t}{{{\varphi }_{0}}\left( \eta  \right){{G}_{\xi}}\left( t,x,\eta ,0 \right)d\eta }=\lim _{x \to 0}\int\limits_{0}^{t}{{{\varphi }_{0}}\left( \eta  \right)\sum\limits_{n=-\infty }^{\infty }{sign\left( x+2na \right)\omega \left( t-\eta ,\left| x+2na \right| \right)\,}d\eta }=$$
$$=\lim _{x \to 0}\int\limits_{0}^{t}{{{\varphi }_{0}}\left( \eta  \right)\omega \left( t-\eta ,x \right)d\eta}-\lim _{x \to 0}\int\limits_{0}^{t}{{{\varphi }_{0}}\left( \eta  \right)\sum\limits_{n=1}^{\infty }{\left[ \omega \left( t-\eta ,2na-x \right)-\omega \left( t-\eta ,2na+x \right) \right]\,}d\eta }=$$
$$=\lim _{x \to 0}\int\limits_{0}^{t}{{{\varphi }_{0}}\left( \eta  \right)\omega \left( t-\eta ,x \right)d\eta}.$$

We replace the function $\omega$ with its value and apply the substitution 
$s=\frac{x}{(t-\eta)^{\beta_1}}$:
$$\lim _{x \to 0}u\left( t,x \right)=\lim _{x \to 0}\int\limits_{0}^{t}{{{\varphi }_{0}}\left( \eta  \right)\omega \left( t-\eta ,x \right)d\eta}=$$
$$=\lim _{x \to 0}\int\limits_{0}^{t}{{{\varphi }_{0}}\left( \eta  \right)\sum\limits_{n=0}^{+\infty }{\frac{{{\left( -1 \right)}^{n}}{{x}^{n}}}{n!}{{(t-\eta)}^{-{{\beta }_{1}}n-1}}E_{\alpha ,-{{\beta }_{1}}n}^{-{{\gamma }_{1}}n}\left[ \delta {{(t-\eta)}^{\alpha }} \right]}d\eta}=$$
$$=\lim _{x \to 0}\int\limits_{0}^{+\infty}{{{\varphi }_{0}}\left[ t-\left(\frac{x}{s}\right)^{\frac{1}{\beta_1}}  \right]\sum\limits_{n=0}^{+\infty }{\frac{{{\left( -1 \right)}^{n}}{{s}^{n-1}}}{\beta_1 n!}E_{\alpha ,-{{\beta }_{1}}n}^{-{{\gamma }_{1}}n}\left[ \delta \left(\frac{x}{s}\right)^{\frac{\alpha}{\beta_1}} \right]}ds}=$$
$$=\varphi_0(t)\int\limits_{0}^{+\infty}\sum\limits_{n=0}^{+\infty }{\frac{{{\left( -1 \right)}^{n}}{{s}^{n-1}}}{ n!\beta_1 \Gamma(-{{\beta }_{1}}n)}}ds=-\varphi_0(t)\left.\sum\limits_{n=0}^{+\infty }{\frac{{{\left( -1 \right)}^{n}}{{s}^{n}}}{ \Gamma(n+1)\Gamma(1-{{\beta }_{1}}n)}}\right|_0^{+\infty}=$$
$$=-\varphi_0(t)\lim_{s \to +\infty}e_{1,\beta_1}^{1,1}(-s)+\varphi_0(t)=\varphi_0(t),\,\,\,\,0\le t\le T.$$

Similarly, we have
$$\lim _{x \to a}u\left( t,x \right)=\varphi_1(t), \,\,\,\, 0 \le t\le T.$$

We now check that our solution \eqref{eq2.4} fulfills the initial condition \eqref{eq2.3}:
$$\lim_{t \to 0} u\left( t,x \right)=\lim_{t \to 0}\int\limits_{0}^{a}{\tau \left( \xi \right)\widetilde G\left( t,x,0,\xi \right)d\xi}=$$
$$=\lim_{t \to 0}\frac{t^{-\beta_1}}{2}\sum\limits_{n=-\infty }^{+\infty }{\sum\limits_{k=0}^{+\infty }{\frac{{{\left( -1 \right)}^{k}}}{k!}}}\sum_{i=0}^{+\infty}\frac{{(-{\gamma }_{1}}-{{\gamma }_{1}}k)_{i}\delta^it^{\alpha i}}{i!\,\Gamma(\alpha i+1-\beta_1-\beta_1k)}\times$$
$$\times\,\int\limits_{0}^{a}\tau(\xi)\frac{\left[ {{\left| x-\xi+2na \right|}^{k}}-{{\left| x+\xi+2na \right|}^{k}} \right]}{t^{\beta_1k}}d\xi=$$
$$=\lim_{t \to 0}\frac{t^{-\beta_1}}{2}{\sum\limits_{k=0}^{+\infty }}\sum_{i=0}^{+\infty}\frac{{\left( -1 \right)}^{k}{(-{\gamma }_{1}}-{{\gamma }_{1}}k)_{i}\delta^it^{\alpha i}}{k!i!\,\Gamma(\alpha i+1-\beta_1-\beta_1k)}\int\limits_{0}^{a}\tau(\xi)\frac{{\left| x-\xi \right|}^{k}}{t^{\beta_1k}}d\xi-$$
$$-\lim_{t \to 0}\frac{t^{-\beta_1}}{2}{\sum\limits_{k=0}^{+\infty }}\sum_{i=0}^{+\infty}\frac{{\left( -1 \right)}^{k}{(-{\gamma }_{1}}-{{\gamma }_{1}}k)_{i}\delta^it^{\alpha i}}{k!i!\,\Gamma(\alpha i+1-\beta_1-\beta_1k)}\int\limits_{0}^{a}\tau(\xi)\frac{{\left| x+\xi \right|}^{k}}{t^{\beta_1k}}d\xi+$$
$$+\lim_{t \to 0}\frac{t^{-\beta_1}}{2}\left(\sum\limits_{n=-\infty }^{-1 }+\sum\limits_{n=1 }^{+\infty }\right){\sum\limits_{k=0}^{+\infty }{\frac{{{\left( -1 \right)}^{k}}}{k!}}}\sum_{i=0}^{+\infty}\frac{{(-{\gamma }_{1}}-{{\gamma }_{1}}k)_{i}\delta^it^{\alpha i}}{i!\,\Gamma(\alpha i+1-\beta_1-\beta_1k)}\times$$
\begin{equation}\label{eq2.27}
    \times\,\int\limits_{0}^{a}\tau(\xi)\frac{\left[ {{\left| x-\xi+2na \right|}^{k}}-{{\left| x+\xi+2na \right|}^{k}} \right]}{t^{\beta_1k}}d\xi.
\end{equation}
The first term in \eqref{eq2.27} can be written as follows:
$$\lim_{t \to 0}\frac{t^{-\beta_1}}{2}{\sum\limits_{k=0}^{+\infty }}\sum_{i=0}^{+\infty}\frac{{\left( -1 \right)}^{k}{(-{\gamma }_{1}}-{{\gamma }_{1}}k)_{i}\delta^it^{\alpha i}}{k!i!\,\Gamma(\alpha i+1-\beta_1-\beta_1k)}\left[\int\limits_{0}^{x}\tau(\xi)\frac{{\left( x-\xi \right)}^{k}}{t^{\beta_1k}}d\xi+\int\limits_{x}^{a}\tau(\xi)\frac{{\left (\xi-x \right)}^{k}}{t^{\beta_1k}}d\xi\right].$$
To simplify the analysis, we evaluate only the term containing the first integral and perform the substitution $z=\frac{x-\xi}{t^{\beta_1}}$, then use $$\lim_{|z| \to \infty} ze_{\alpha,\beta}^{\mu,\delta}(z)=-\frac{1}{\Gamma(\mu-\alpha)\Gamma(\delta+\beta)}$$
to get
$$\lim_{t \to 0}\frac{t^{-\beta_1}}{2}{\sum\limits_{k=0}^{+\infty }}\sum_{i=0}^{+\infty}\frac{{\left( -1 \right)}^{k}{(-{\gamma }_{1}}-{{\gamma }_{1}}k)_{i}\delta^it^{\alpha i}}{k!i!\,\Gamma(\alpha i+1-\beta_1-\beta_1k)}\int\limits_{0}^{x}\tau(\xi)\frac{{\left( x-\xi \right)}^{k}}{t^{\beta_1k}}d\xi=$$
$$=\lim_{t \to 0}\frac{1}{2}{\sum\limits_{k=0}^{+\infty }}\sum_{i=0}^{+\infty}\frac{{\left( -1 \right)}^{k}{(-{\gamma }_{1}}-{{\gamma }_{1}}k)_{i}\delta^it^{\alpha i}}{k!i!\,\Gamma(\alpha i+1-\beta_1-\beta_1k)}\int\limits_{0}^{+\infty}\tau(x-t^{\beta_1}z)z^k\,dz=$$
$$=\left.\frac{\tau(x)}{2}{\sum\limits_{k=0}^{+\infty }}\frac{{z\left( -z \right)}^{k}}{\Gamma(k+2)\,\Gamma(1-\beta_1-\beta_1k)}\right|_0^{+\infty}=-\frac{\tau(x)}{2}\lim_{z \to +\infty}(-z)e_{1,\beta_1}^{2,1-\beta_1}(-z)=\frac{\tau(x)}{2}.$$
In the same way, we determine that 
$$\lim_{t \to 0}\frac{t^{-\beta_1}}{2}{\sum\limits_{k=0}^{+\infty }}\sum_{i=0}^{+\infty}\frac{{\left( -1 \right)}^{k}{(-{\gamma }_{1}}-{{\gamma }_{1}}k)_{i}\delta^it^{\alpha i}}{k!i!\,\Gamma(\alpha i+1-\beta_1-\beta_1k)}\int\limits_{x}^{a}\tau(\xi)\frac{{\left( \xi-x \right)}^{k}}{t^{\beta_1k}}d\xi=\frac{\tau(x)}{2}.$$
Applying the same procedure to the other terms in equality \eqref{eq2.27}, we obtain that their values are equal to zero. Consequently, it follows that 
$$\lim_{t \to 0} u\left( t,x \right)=\tau(x),\,\,\,\,0\le x \le a.$$

Theorem 2.1 is proved.

\section{Examples and numerical illustrations}
In this section, we present 2 examples considering the influence of initial data and external source separately. 

The following example shows the influence of initial data on the process. In this case, we assume that no external force is involved in the process. 

\textbf{Example 1.} Let $a=\pi,$ $T=2,$ $\varphi_0(t)=\varphi_1(t)=0,$ $f(t,x)=0,$ $\tau(x)=sin(x).$ In this case, from \eqref{eq2.4} one can easily get 
$$u(t,x)=\int\limits_{0}^{\pi}{sin \left( \xi \right)\widetilde G(t,x,0,\xi)d\xi},$$
where
$$\widetilde G(t,x,0,\xi)=\frac{1}{2}\sum\limits_{n=-\infty }^{+\infty }{\sum\limits_{k=0}^{+\infty }{\frac{{{\left( -1 \right)}^{k}}\left[ {{\left| x-\xi+2n\pi \right|}^{k}}-{{\left| x+\xi+2n\pi \right|}^{k}} \right]}{k!}}}\times$$
$$\times\sum_{i=0}^{+\infty}\frac{\delta^it^{\alpha i-{{\beta }_{1}}-\beta_1k}\Gamma(i{-{\gamma }_{1}}-{{\gamma }_{1}}k)}{\Gamma(i+1)\,\Gamma({-{\gamma }_{1}}-{{\gamma }_{1}}k)\Gamma(\alpha i+1-\beta_1-\beta_1k)},$$
$x\in[0,\pi],$ $t\in[0,2]$, $\beta_1=\frac{\beta}{2}$, $\gamma_1=\frac{\gamma}{2}$.

We illustrate the influence of fractional order $\beta$ on the solution behavior. We illustrate in graphs the fixation of the parameters as follows: 

{\bf (1)} $\alpha=0,8$, $\beta=0,1$, $\gamma=0,3$, $\delta=0,5$; 

{\bf (2)} $\alpha=0,8$, $\beta=0,5$, $\gamma=0,3$, $\delta=0,5$; 

{\bf (3)} $\alpha=0,8$, $\beta=0,9$, $\gamma=0,3$, $\delta=0,5$.

\begin{figure}[h]
    \centering
    \includegraphics[width=0.99\linewidth]{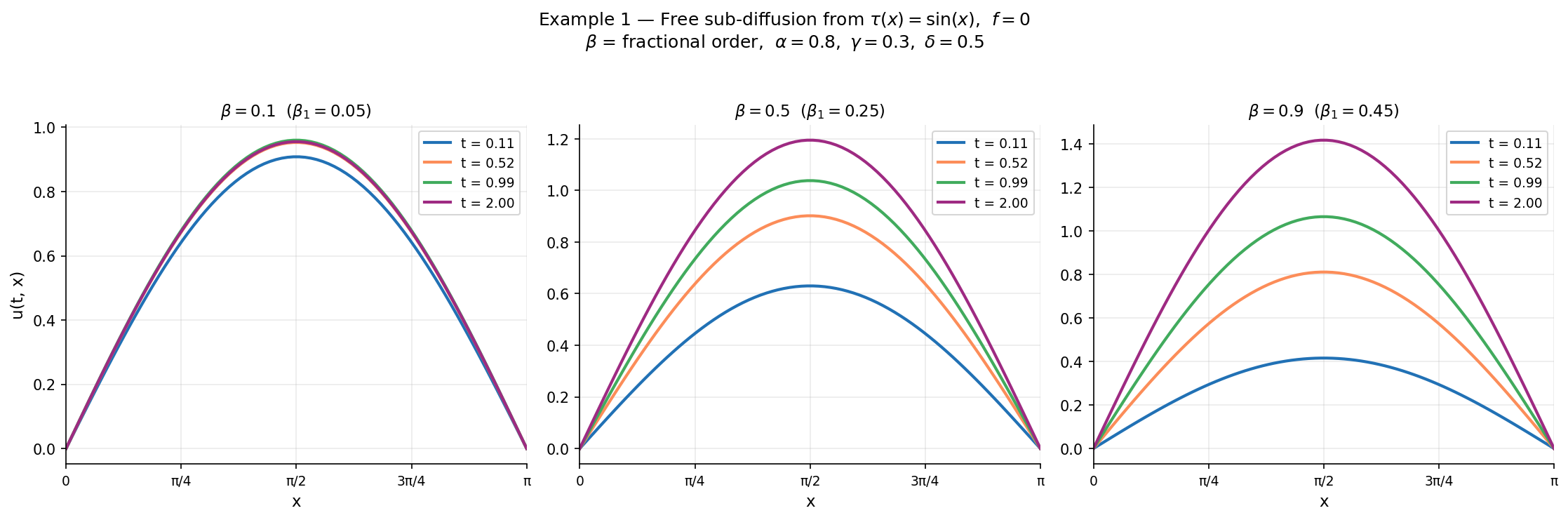}
    \caption{Influence of the initial data.}
    \label{fig:1}
\end{figure}

{\it Explanation:} The initial sin(x) profile relaxes toward zero with no external forcing. The crucial corrected behaviour across the three panels is:
\begin{itemize}
\item $\beta = 0.1$\, ($\beta_1 = 0.05$, very small fractional order): extremely slow decay. The small fractional order means almost no memory of the driving force dissipates over time. The profile barely decreases even at $t = 2$; 

\item $\beta = 0.5$\, ($\beta_1 = 0.25$): intermediate decay rate;  

\item $\beta= 0.9$\, ($\beta_1= 0.45$): fastest decay, approaching half-order classical behaviour. The larger fractional order brings the process closer to normal diffusion, so the profile relaxes much more quickly.
\end{itemize}
The next example deals with the influence of the external force. That is, we assume that there is no influence of the initial data ($u(0,x)\equiv 0$). 

\textbf{Example 2.} Let $a=\pi$, $T=2$, $\varphi_0(t)=\varphi_1(t)=0,$ $\tau(x)=0,$ $f(t,x)=t\sin x$. Then according to \eqref{eq2.4} we get
$$u(t,x)=\int\limits_{0}^{t}{\int\limits_{0}^{\pi}{\eta sin(\xi) G\left( t,x,\eta ,\xi \right)d\xi d\eta }},$$
where
$$G(t,x,\eta,\xi)=\frac{1}{2}{{(t-\eta)}^{{{\beta }_{1}}-1}}\times$$
$$\times\sum\limits_{n=-\infty }^{+\infty }{\sum\limits_{k=0}^{+\infty }{\sum\limits_{m=0}^{+\infty }}}\left[{{{\frac{\Gamma \left( {{\gamma }_{1}}-{{\gamma }_{1}}k+m \right){{\left[ -\left| x-\xi+2n\pi \right|{{(t-\eta)}^{-{{\beta }_{1}}}} \right]}^{k}}{{\left[ \delta {{(t-\eta)}^{\alpha }} \right]}^{m}}}{\Gamma \left( k+1 \right)\Gamma \left( {{\gamma }_{1}}-{{\gamma }_{1}}k \right)\Gamma \left( \alpha m+{{\beta }_{1}}-{{\beta }_{1}}k \right)\Gamma \left( m+1 \right)}}-}}\right.$$
$$\left.-{{{\frac{\Gamma \left( {{\gamma }_{1}}-{{\gamma }_{1}}k+m \right){{\left[ -\left| x+\xi+2n\pi \right|{{(t-\eta)}^{-{{\beta }_{1}}}} \right]}^{k}}{{\left[ \delta {{(t-\eta)}^{\alpha }} \right]}^{m}}}{\Gamma \left( k+1 \right)\Gamma \left( {{\gamma }_{1}}-{{\gamma }_{1}}k \right)\Gamma \left( \alpha m+{{\beta }_{1}}-{{\beta }_{1}}k \right)\Gamma \left( m+1 \right)}}}}\right],$$
$x\in[0,\pi],$ $t\in[0,2]$,  $\beta_1=\frac{\beta}{2}$, $\gamma_1=\frac{\gamma}{2}$.

We illustrate in graphs the fixation of the parameters as follows:

{\bf (1)} $\alpha=0,8$, $\beta=0,1$, $\gamma=0,3$, $\delta=0,5$; 

{\bf (2)} $\alpha=0,8$, $\beta=0,5$, $\gamma=0,3$, $\delta=0,5$; 

{\bf (3)} $\alpha=0,8$, $\beta=0,9$, $\gamma=0,3$, $\delta=0,5$.

\begin{figure}[h]
    \centering
    \includegraphics[width=0.99\linewidth]{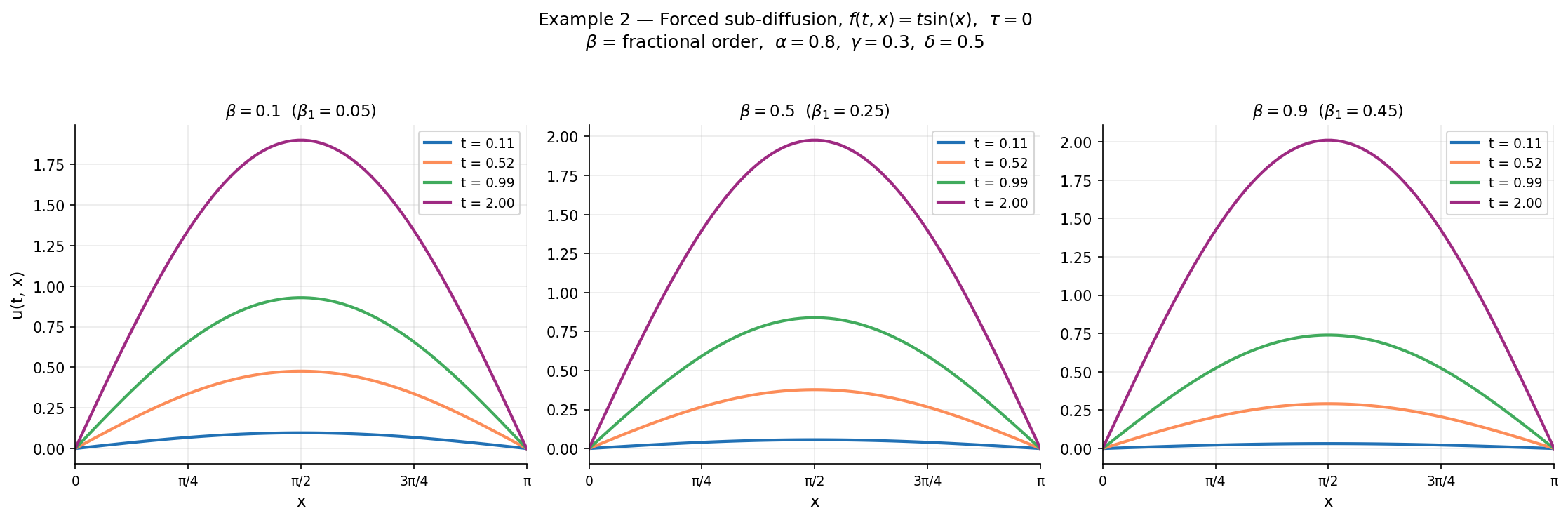}
    \caption{Influence of the external force.}
    \label{fig:2}
\end{figure}

{\it Explanation:} With $f(t,x) = t\sin x$ pumping energy in and zero initial conditions, the field accumulates from zero. The corrected trend:
\begin{itemize}
\item $\beta = 0.1$: the response to forcing is very slow and the amplitude remains small -- deeply sub-diffusive systems respond sluggishly to sources.

\item $\beta = 0.9$: the amplitude grows much faster and reaches the highest values by $t = 2$, because the larger fractional order allows the system to respond more efficiently to the accumulating source.
\end{itemize}
The parameters $\alpha$ and $\delta$ (fixed here at $0.8$ and $0.5$) control the shape and timescale of the Prabhakar kernel, modulating the overall magnitude and curvature of the time evolution without changing this fundamental monotone trend in $\beta$.

\section{Conclusion}
In this work, we have successfully established a Green’s function framework for solving the first initial-boundary value problem for a sub-diffusion equation involving the regularized Prabhakar fractional derivative. By utilizing the superposition method, we reduced the complex non-homogeneous problem into manageable components, allowing for the derivation of an explicit solution representation. A significant result of this study is the explicit construction of Green's function, which we have expressed in terms of a bivariate Mittag-Leffler-type function $E_{12}$. This formulation aligns with the broader development of bivariate fractional calculus and operational methods, such as those explored in the works of Simon Isah regarding general analytic kernels and Noosheza Rani concerning the algebraic structures of Prabhakar-type operators. Our rigorous proof confirms that the obtained representation indeed constitutes a regular solution, satisfying all imposed boundary and initial conditions. 

The methodology presented here further generalizes the structural properties of generalized fractional derivatives, a field significantly advanced by Tomovski through the study of Hilfer-Prabhakar derivatives and their related inequalities. These results provide a robust basis for further research into anomalous diffusion models and hereditary systems where memory effects require the sophisticated parameterization offered by the Prabhakar kernel. Future developments may include extending this framework to multi-dimensional domains or investigating the influence of varying fractional orders on the stability of such systems.

\section{Statements and Declarations}
\subsection{Data Availability Statement}
No new data were created or analyzed in this study. Therefore, data sharing is not applicable.

\subsection{Funding Statement}
The authors are partially supported by Methusalem programme of the Ghent University Special Research Fund(BOF), grant/award number: 01M01021 and the FWO Odysseus 1, grant/award number: G.0H94.18N.
\subsection{Conflict of Interest Disclosure}
The authors declare that there are no conflicts of interest regarding the publication of this paper.

\section{Appendices}
\subsection{Appendix A1.}
Let us compute the integral with respect to $s$ separately:
$$\int\limits_{\eta}^{t}(t-s)^{-\beta} E_{\alpha, 1-\beta}^{-\gamma}\left[\mathcal{\delta}(t-s)^\alpha\right]{{G}_{\xi}}\left( s,x,\eta ,0 \right)ds=\sum_{n=-\infty}^{+\infty}sign(x+2na)\sum_{k=0}^{+\infty}\frac{(-1)^k\left|x+2na\right|^k}{k!}\times$$
$$\times\int\limits_{\eta}^{t}(t-s)^{-\beta} (s-\eta)^{-\beta_1k-1}E_{\alpha, 1-\beta}^{-\gamma}\left[\mathcal{\delta}(t-s)^\alpha\right]E_{\alpha, -\beta_1k}^{-\gamma_1k}\left[\mathcal{\delta}(s-\eta)^\alpha\right]ds.$$
Next, we use the formula  \cite{Knopp}
\begin{equation}\label{eq2.21}
	\sum\limits_{k=0}^{+\infty }{{{a}_{k}}}\sum\limits_{m=0}^{+\infty }{{{b}_{m}}=\sum\limits_{k=0}^{+\infty }{\sum\limits_{m=0}^{k}{{{a}_{m}}}{{b}_{k-m}}}}
\end{equation}
for the generalized Mittag-Leffler functions and perform the substitution $s=(t-\eta)z+\eta$:
$$\sum_{n=-\infty}^{+\infty}sign(x+2na)\sum_{k=0}^{+\infty}\frac{(-1)^k\left|x+2na\right|^k}{k!}\sum_{i=0}^{+\infty}\sum_{j=0}^{i}\frac{\delta^i(-\gamma)_j(-\gamma_1k)_{i-j}}{j!(i-j)!\Gamma(\alpha j+1-\beta)\Gamma(\alpha i-\alpha j-\beta_1k)}\times$$
$$\times\int\limits_{\eta}^{t}(t-s)^{\alpha j-\beta} (s-\eta)^{\alpha i-\alpha j-\beta_1k-1}ds=$$
$$=\sum_{n=-\infty}^{+\infty}sign(x+2na)\sum_{k=0}^{+\infty}\frac{(-1)^k\left|x+2na\right|^k}{k!}\sum_{i=0}^{+\infty}\frac{\delta^i (t-\eta)^{\alpha i-\beta_1k-\beta}}{\Gamma(\alpha i-\beta_1k-\beta+1)}\sum_{j=0}^{i}\frac{(-\gamma)_j(-\gamma_1k)_{i-j}}{j!(i-j)!}.$$
In accordance with the formula \cite{Prudnikov}
\begin{equation}\label{eq2.22}
	\sum\limits_{m=0}^{k}{\frac{{{\left( \delta  \right)}_{m}}{{\left( \gamma  \right)}_{k-m}}}{m!\left( k-m \right)!}=\frac{{{\left( \delta +\gamma  \right)}_{k}}}{k!}},
\end{equation}
we obtain
$$\sum_{n=-\infty}^{+\infty}sign(x+2na)\sum_{k=0}^{+\infty}\frac{(-1)^k\left|x+2na\right|^k(t-\eta)^{-\beta_1k-\beta}}{k!}\sum_{i=0}^{+\infty}\frac{ (-\gamma-\gamma_1k)_i\delta^i(t-\eta)^{\alpha i}}{i!\Gamma(\alpha i-\beta_1k-\beta+1)}=$$
$$=\sum_{n=-\infty}^{+\infty}sign(x+2na)\sum_{k=0}^{+\infty}\frac{(-1)^k\left|x+2na\right|^k(t-\eta)^{-\beta_1k-\beta}}{k!}E_{\alpha, 1-\beta_1k-\beta}^{-\gamma-\gamma_1k}\left[\delta(t-\eta)^{\alpha }\right].$$

Next, we insert the derived result into the preceding equality and proceed with the computation:
$$\frac{\partial}{\partial t}\int\limits_0^t{{\varphi }_{0}}\left( \eta  \right)\sum_{n=-\infty}^{+\infty}sign(x+2na)\sum_{k=0}^{+\infty}\frac{(-1)^k\left|x+2na\right|^k(t-\eta)^{-\beta_1k-\beta}}{k!}E_{\alpha, 1-\beta_1k-\beta}^{-\gamma-\gamma_1k}\left[\delta(t-\eta)^{\alpha }\right]d\eta=$$
$$=\lim_{\eta \to t}\left[{{\varphi }_{0}}\left( \eta  \right)\sum_{n=-\infty}^{+\infty}sign(x+2na)\sum_{k=0}^{+\infty}\frac{(-1)^k\left|x+2na\right|^k(t-\eta)^{-\beta_1k-\beta}}{k!}E_{\alpha, 1-\beta_1k-\beta}^{-\gamma-\gamma_1k}\left[\delta(t-\eta)^{\alpha }\right]\right]+$$
$$+\int\limits_0^t{{\varphi }_{0}}\left( \eta  \right)\sum_{n=-\infty}^{+\infty}sign(x+2na)\sum_{k=0}^{+\infty}\frac{(-1)^k\left|x+2na\right|^k(t-\eta)^{-\beta_1k-\beta-1}}{k!}E_{\alpha, -\beta_1k-\beta}^{-\gamma-\gamma_1k}\left[\delta(t-\eta)^{\alpha }\right]d\eta.$$
Now let us carefully evaluate the limit that arises from taking the derivative:
$$\lim_{\eta \to t}\left[{{\varphi }_{0}}\left( \eta  \right)\sum_{n=-\infty}^{+\infty}sign(x+2na)\sum_{k=0}^{+\infty}\frac{(-1)^k\left|x+2na\right|^k(t-\eta)^{-\beta_1k-\beta}}{k!}\right]\times$$
$$\times\lim_{\eta \to t}E_{\alpha, 1-\beta_1k-\beta}^{-\gamma-\gamma_1k}\left[\delta(t-\eta)^{\alpha }\right]=$$
$$=\lim_{\eta \to t}\left[{{\varphi }_{0}}\left( \eta  \right)\sum_{n=-\infty}^{+\infty}sign(x+2na)(t-\eta)^{-\beta}\sum_{k=0}^{+\infty}\frac{(-1)^k\left|x+2na\right|^k(t-\eta)^{-\beta_1k}}{k!\Gamma(1-\beta_1k-\beta)}\right].$$
We rewrite the final expression using the Wright-type function \cite{Pskhu 2}  
$$e_{\alpha,{\beta}}^{\mu,{\delta }}\left( z \right)=\sum_{n=0}^{+\infty}\frac{z^n}{\Gamma(\alpha n+\mu)\Gamma(\delta-\beta n)}$$  
and make use of its  property \cite{Pskhu 2}
$$\lim_{\left|z\right| \to \infty}z^2 e_{\alpha,{\beta}}^{\alpha-k,{\delta }}\left( z \right)=\frac{1}{\Gamma(-k-\alpha)\Gamma(\delta+2\beta)}:$$

$$\lim_{\eta \to t}\left[{{\varphi }_{0}}\left( \eta  \right)\sum_{n=-\infty}^{+\infty}sign(x+2na)(t-\eta)^{-\beta}e_{1,\beta_1}^{1,1-\beta}\left(-\frac{\left|x+2na\right|}{(t-\eta)^{\beta_1}}\right)\right]=$$
$$=\lim_{\eta \to t}\left[{{\varphi }_{0}}\left( \eta  \right)\sum_{n=-\infty}^{+\infty}\frac{sign(x+2na)}{(x+2na)^2}\left(-\frac{\left|x+2na\right|}{(t-\eta)^{\beta_1}}\right)^2e_{1,\beta_1}^{1,1-\beta}\left(-\frac{\left|x+2na\right|}{(t-\eta)^{\beta_1}}\right)\right]=$$
$$=\frac{{{\varphi }_{0}}\left( t  \right)}{\Gamma(-1)\Gamma(1)}\sum_{n=-\infty}^{+\infty}\frac{sign(x+2na)}{(x+2na)^2}=0.$$

\subsection{Appendix A2.}
Let us make the following simplifications:
$$\frac{\partial^2}{\partial x^2}{{G}_{\xi}}\left( t,x,\eta ,0 \right)=\frac{\partial^2}{\partial x^2}\sum_{n=-\infty}^{+\infty}sign(x+2na)\omega(t-\eta,\left|x+2na\right|)=$$
$$=\frac{\partial^2}{\partial x^2}\left[-\sum_{n=-\infty}^{-1}\omega(t-\eta,-x-2na)+\sum_{n=0}^{+\infty}\omega(t-\eta,x+2na)\right]=$$
$$=\frac{\partial^2}{\partial x^2}\sum_{n=-\infty}^{-1}\left(-(t-\eta)^{-1}E_{\alpha, 0}^{0}\left[ \delta {{(t-\eta)}^{\alpha }} \right]-\sum\limits_{k=1}^{+\infty }{\frac{{}{{(x+2na)}^{k}}}{k!}{{(t-\eta)}^{-{{\beta }_{1}}k-1}}E_{\alpha ,-{{\beta }_{1}}k}^{-{{\gamma }_{1}}k}\left[ \delta {{(t-\eta)}^{\alpha }} \right]}\right)+$$
$$+\frac{\partial^2}{\partial x^2}\sum_{n=0}^{+\infty}\left((t-\eta)^{-1}E_{\alpha, 0}^{0}\left[ \delta {{(t-\eta)}^{\alpha }} \right]+\sum\limits_{k=1}^{+\infty }{\frac{{{\left( -1 \right)}^{k}}{{(x+2na)}^{k}}}{k!}{{(t-\eta)}^{-{{\beta }_{1}}k-1}}E_{\alpha ,-{{\beta }_{1}}k}^{-{{\gamma }_{1}}k}\left[ \delta {{(t-\eta)}^{\alpha }} \right]}\right)=$$
$$=\frac{\partial}{\partial x}\sum_{n=-\infty}^{-1}\left(-\sum\limits_{k=0}^{+\infty }{\frac{{}{{(x+2na)}^{k}}}{k!}{{(t-\eta)}^{-{{\beta }_{1}}k-\beta_1-1}}E_{\alpha ,-{{\beta }_{1}}k-\beta_1}^{-{{\gamma }_{1}}k-\gamma_1}\left[ \delta {{(t-\eta)}^{\alpha }} \right]}\right)+$$
$$+\frac{\partial}{\partial x}\sum_{n=0}^{+\infty}\left(-\sum\limits_{k=0}^{+\infty }{\frac{{{\left( -1 \right)}^{k}}{{(x+2na)}^{k}}}{k!}{{(t-\eta)}^{-{{\beta }_{1}}k-\beta_1-1}}E_{\alpha ,-{{\beta }_{1}}k-\beta_1}^{-{{\gamma }_{1}}k-\gamma_1}\left[ \delta {{(t-\eta)}^{\alpha }} \right]}\right)=$$
$$=\frac{\partial}{\partial x}\sum_{n=-\infty}^{-1}\left(-{{(t-\eta)}^{-\beta_1-1}}E_{\alpha ,-\beta_1}^{-\gamma_1}\left[ \delta {{(t-\eta)}^{\alpha }} \right]-\right.$$
$$\left.-\sum\limits_{k=1}^{+\infty }{\frac{{}{{(x+2na)}^{k}}}{k!}{{(t-\eta)}^{-{{\beta }_{1}}k-\beta_1-1}}E_{\alpha ,-{{\beta }_{1}}k-\beta_1}^{-{{\gamma }_{1}}k-\gamma_1}\left[ \delta {{(t-\eta)}^{\alpha }} \right]}\right)+$$
$$+\frac{\partial}{\partial x}\sum_{n=0}^{+\infty}\left(-{{(t-\eta)}^{-\beta_1-1}}E_{\alpha ,-\beta_1}^{-\gamma_1}\left[ \delta {{(t-\eta)}^{\alpha }} \right]-\right.$$
$$\left.-\sum\limits_{k=1}^{+\infty }{\frac{{{\left( -1 \right)}^{k}}{{(x+2na)}^{k}}}{k!}{{(t-\eta)}^{-{{\beta }_{1}}k-\beta_1-1}}E_{\alpha ,-{{\beta }_{1}}k-\beta_1}^{-{{\gamma }_{1}}k-\gamma_1}\left[ \delta {{(t-\eta)}^{\alpha }} \right]}\right)=$$
$$=-\sum_{n=-\infty}^{-1}\sum\limits_{k=0}^{+\infty }{\frac{{}{{(x+2na)}^{k}}}{k!}{{(t-\eta)}^{-{{\beta }_{1}}k-2\beta_1-1}}E_{\alpha ,-{{\beta }_{1}}k-2\beta_1}^{-{{\gamma }_{1}}k-2\gamma_1}\left[ \delta {{(t-\eta)}^{\alpha }} \right]}+$$
$$+\sum_{n=0}^{+\infty}\sum\limits_{k=0}^{+\infty }{\frac{{{\left( -1 \right)}^{k}}{{(x+2na)}^{k}}}{k!}{{(t-\eta)}^{-{{\beta }_{1}}k-2\beta_1-1}}E_{\alpha ,-{{\beta }_{1}}k-2\beta_1}^{-{{\gamma }_{1}}k-2\gamma_1}\left[ \delta {{(t-\eta)}^{\alpha }} \right]}=$$
$$=\sum_{n=-\infty}^{+\infty}sign(x+2na)\sum_{k=0}^{+\infty}\frac{(-1)^k\left|x+2na\right|^k(t-\eta)^{-\beta_1k-\beta-1}}{k!}E_{\alpha, -\beta_1k-\beta}^{-\gamma-\gamma_1k}\left[\delta(t-\eta)^{\alpha }\right].$$

\subsection{Appendix A3.}

First, let us compute the value of $\widetilde G\left( t,x,0,\xi \right).$ For this purpose, we substitute the explicit form of the function 
$G\left( t,x,\eta ,\xi \right)$ into \eqref{eq2.6}:
$$\widetilde G\left( t,x,0,\xi \right)=\int\limits_{0}^{t}\eta^{-\beta}E_{\alpha, 1-\beta}^{-\gamma}[\delta \eta^\alpha]{\frac{{{(t-\eta)}^{{{\beta }_{1}}-1}}}{2}\times}$$
$${{\times\sum\limits_{n=-\infty }^{+\infty }{\sum\limits_{k=0}^{+\infty }{\frac{{{\left( -1 \right)}^{k}}\left[ {{\left| x-\xi+2na \right|}^{k}}-{{\left| x+\xi+2na \right|}^{k}} \right]}{k!}{{(t-\eta)}^{-{{\beta }_{1}}k}}E_{\alpha ,{{\beta }_{1}}-{{\beta }_{1}}k}^{{{\gamma }_{1}}-{{\gamma }_{1}}k}\left[ \delta {{(t-\eta)}^{\alpha }} \right]}} \,d\eta }}=$$
$$=\frac{1}{2}\sum\limits_{n=-\infty }^{+\infty }{\sum\limits_{k=0}^{+\infty }{\frac{{{\left( -1 \right)}^{k}}\left[ {{\left| x-\xi+2na \right|}^{k}}-{{\left| x+\xi+2na \right|}^{k}} \right]}{k!}}}\times$$
$$\times\int\limits_{0}^{t}\eta^{-\beta}E_{\alpha, 1-\beta}^{-\gamma}[\delta \eta^\alpha]{{(t-\eta)}^{{{\beta }_{1}}-\beta_1k-1}}E_{\alpha ,{{\beta }_{1}}-{{\beta }_{1}}k}^{{{\gamma }_{1}}-{{\gamma }_{1}}k}\left[ \delta {{(t-\eta)}^{\alpha }} \right]\,d\eta.$$
We utilize \eqref{eq2.21} for $E_{\alpha, 1-\beta}^{-\gamma}[\delta \eta^\alpha]$ and $E_{\alpha ,{{\beta }_{1}}-{{\beta }_{1}}k}^{{{\gamma }_{1}}-{{\gamma }_{1}}k}\left[ \delta {{(t-\eta)}^{\alpha }} \right]$:
$$\widetilde G\left( t,x,0,\xi \right)=\frac{1}{2}\sum\limits_{n=-\infty }^{+\infty }{\sum\limits_{k=0}^{+\infty }{\frac{{{\left( -1 \right)}^{k}}\left[ {{\left| x-\xi+2na \right|}^{k}}-{{\left| x+\xi+2na \right|}^{k}} \right]}{k!}}}\times$$
$$\times\int\limits_{0}^{t}\sum_{i=0}^{+\infty}\sum_{j=0}^{i}\frac{\delta^i(-\gamma)_j({{\gamma }_{1}}-{{\gamma }_{1}}k)_{i-j}}{j!(i-j)!\,\Gamma(\alpha j+1-\beta)\Gamma(\alpha i-\alpha j+\beta_1-\beta_1k)}\eta^{\alpha j-\beta}{{(t-\eta)}^{\alpha i-\alpha j+{{\beta }_{1}}-\beta_1k-1}}\,d\eta.$$
Then we make the substitution $\eta=ts$ and apply the formula \eqref{eq2.22}:
$$\widetilde G\left( t,x,0,\xi \right)=\frac{1}{2}\sum\limits_{n=-\infty }^{+\infty }{\sum\limits_{k=0}^{+\infty }{\frac{{{\left( -1 \right)}^{k}}\left[ {{\left| x-\xi+2na \right|}^{k}}-{{\left| x+\xi+2na \right|}^{k}} \right]}{k!}}}\times$$
$$\times\sum_{i=0}^{+\infty}\sum_{j=0}^{i}\frac{\delta^it^{\alpha i-{{\beta }_{1}}-\beta_1k}(-\gamma)_j({{\gamma }_{1}}-{{\gamma }_{1}}k)_{i-j}}{j!(i-j)!\,\Gamma(\alpha i+1-\beta_1-\beta_1k)}=$$
$$=\frac{1}{2}\sum\limits_{n=-\infty }^{+\infty }{\sum\limits_{k=0}^{+\infty }{\frac{{{\left( -1 \right)}^{k}}\left[ {{\left| x-\xi+2na \right|}^{k}}-{{\left| x+\xi+2na \right|}^{k}} \right]}{k!}}}\sum_{i=0}^{+\infty}\frac{\delta^it^{\alpha i-{{\beta }_{1}}-\beta_1k}({-{\gamma }_{1}}-{{\gamma }_{1}}k)_{i}}{i!\,\Gamma(\alpha i+1-\beta_1-\beta_1k)}=$$
$$=\frac{1}{2}\sum\limits_{n=-\infty }^{+\infty }{\sum\limits_{k=0}^{+\infty }{\frac{{{\left( -1 \right)}^{k}}\left[ {{\left| x-\xi+2na \right|}^{k}}-{{\left| x+\xi+2na \right|}^{k}} \right]}{k!}}}\times$$
$$\times\sum_{i=0}^{+\infty}\frac{\delta^it^{\alpha i-{{\beta }_{1}}-\beta_1k}\Gamma(i{-{\gamma }_{1}}-{{\gamma }_{1}}k)}{\Gamma(i+1)\,\Gamma({-{\gamma }_{1}}-{{\gamma }_{1}}k)\Gamma(\alpha i+1-\beta_1-\beta_1k)}.$$
Using the obtained expression, we represent the function 
$\widetilde G\left( t,x,0,\xi \right)$ in terms of $E_{12}:$
$$\widetilde G\left( t,x,0,\xi \right)=\frac{{{ t }^{{{-\beta }_{1}}}}}{2}\sum\limits_{n=-\infty }^{\infty }{\left[ {{E}_{12}}\left( \left. \begin{matrix}
		-{{\gamma }_{1}},1,{{-\gamma }_{1}};\,\,\,\,\,\,\,\,\,\,\,\,\,\,\,\,\,\,\,\,\,\,\,\,\,\,\,  \\
		-{{\beta }_{1}},\alpha ,{1-{\beta }_{1}};-{{\gamma }_{1}},{{-\gamma }_{1}};1,1;1,1  \\
	\end{matrix} \right|\begin{matrix}
		-\left| x-\xi+2an \right|{{t}^{-{{\beta }_{1}}}}  \\
		\delta {{t}^{\alpha }}  \\
	\end{matrix} \right) \right.}-$$
$$\left. -{{E}_{12}}\left( \left. \begin{matrix}
	-{{\gamma }_{1}},1,{{-\gamma }_{1}};\,\,\,\,\,\,\,\,\,\,\,\,\,\,\,\,\,\,\,\,\,\,\,\,\,\,\,  \\
	-{{\beta }_{1}},\alpha ,{1-{\beta }_{1}};-{{\gamma }_{1}},{{-\gamma }_{1}};1,1;1,1  \\
\end{matrix} \right|\begin{matrix}
	-\left| x+\xi+2an \right|{{t}^{-{{\beta }_{1}}}}  \\
	\delta {{t}^{\alpha }}  \\
\end{matrix} \right) \right].$$
Next let us compute the derivative of the function $\widetilde G\left( t,x,0,\xi \right)$ with respect to $t:$
$$\widetilde G_t\left( t,x,0,\xi \right)=\frac{\partial}{\partial t}\left(\frac{{{ t }^{{{-\beta }_{1}}}}}{2}\sum\limits_{n=-\infty }^{\infty }{\left[ {{E}_{12}}\left( \left. \begin{matrix}
		-{{\gamma }_{1}},1,{{-\gamma }_{1}};\,\,\,\,\,\,\,\,\,\,\,\,\,\,\,\,\,\,\,\,\,\,\,\,\,\,\,  \\
		-{{\beta }_{1}},\alpha ,{1-{\beta }_{1}};-{{\gamma }_{1}},{{-\gamma }_{1}};1,1;1,1  \\
	\end{matrix} \right|\begin{matrix}
		-\left| x-\xi+2an \right|{{t}^{-{{\beta }_{1}}}}  \\
		\delta {{t}^{\alpha }}  \\
	\end{matrix} \right) \right.}-\right.$$
$$\left.\left. -{{E}_{12}}\left( \left. \begin{matrix}
	-{{\gamma }_{1}},1,{{-\gamma }_{1}};\,\,\,\,\,\,\,\,\,\,\,\,\,\,\,\,\,\,\,\,\,\,\,\,\,\,\,  \\
	-{{\beta }_{1}},\alpha ,{1-{\beta }_{1}};-{{\gamma }_{1}},{{-\gamma }_{1}};1,1;1,1  \\
\end{matrix} \right|\begin{matrix}
	-\left| x+\xi+2an \right|{{t}^{-{{\beta }_{1}}}}  \\
	\delta {{t}^{\alpha }}  \\
\end{matrix} \right) \right]\right)=$$
$$=\frac{\partial}{\partial t}\left( \sum\limits_{n=-\infty }^{+\infty }{\sum\limits_{k=0}^{+\infty }{\frac{{{\left( -1 \right)}^{k}}\left[ {{\left| x-\xi+2na \right|}^{k}}-{{\left| x+\xi+2na \right|}^{k}} \right]}{2\Gamma(k+1)}}}\sum_{i=0}^{+\infty}\frac{({-{\gamma }_{1}}-{{\gamma }_{1}}k)_i\delta^it^{\alpha i-{{\beta }_{1}}-\beta_1k}}{i!\,\Gamma(\alpha i+1-\beta_1-\beta_1k)}\right)=$$
$$= \sum\limits_{n=-\infty }^{+\infty }{\sum\limits_{k=0}^{+\infty }{\frac{{{\left( -1 \right)}^{k}}\left[ {{\left| x-\xi+2na \right|}^{k}}-{{\left| x+\xi+2na \right|}^{k}} \right]}{2\Gamma(k+1)}}}\sum_{i=0}^{+\infty}\frac{({-{\gamma }_{1}}-{{\gamma }_{1}}k)_i\delta^it^{\alpha i-{{\beta }_{1}}-\beta_1k-1}}{i!\,\Gamma(\alpha i-\beta_1-\beta_1k)}.$$
Now let us evaluate the following integral:
$$\int\limits_{0}^{t}{(t-s)^{-\beta} E_{\alpha, 1-\beta}^{-\gamma}\left[\mathcal{\delta}(t-s)^\alpha\right]\widetilde G_s\left( s,x,0,\xi \right)ds}=\int\limits_{0}^{t}{(t-s)^{-\beta} E_{\alpha, 1-\beta}^{-\gamma}\left[\mathcal{\delta}(t-s)^\alpha\right]\times}$$
$$\times\sum\limits_{n=-\infty }^{+\infty }{\sum\limits_{k=0}^{+\infty }{\frac{{{\left( -1 \right)}^{k}}\left[ {{\left| x-\xi+2na \right|}^{k}}-{{\left| x+\xi+2na \right|}^{k}} \right]}{2\Gamma(k+1)}}}s^{-{{\beta }_{1}}-\beta_1k-1}\sum_{i=0}^{+\infty}\frac{({-{\gamma }_{1}}-{{\gamma }_{1}}k)_i\delta^is^{\alpha i}}{i!\,\Gamma(\alpha i-\beta_1-\beta_1k)}ds=$$
$$=\int\limits_{0}^{t}{(t-s)^{-\beta} \sum\limits_{n=-\infty }^{+\infty }{\sum\limits_{k=0}^{+\infty }{\frac{{{\left( -1 \right)}^{k}}\left[ {{\left| x-\xi+2na \right|}^{k}}-{{\left| x+\xi+2na \right|}^{k}} \right]}{2\Gamma(k+1)}}}s^{-{{\beta }_{1}}-\beta_1k-1}\times}$$
$$\times \sum_{j=0}^{+\infty}\frac{(-\gamma)_j\delta^j(t-s)^{\alpha j}}{j!\Gamma(\alpha j+1-\beta)}\sum_{i=0}^{+\infty}\frac{({-{\gamma }_{1}}-{{\gamma }_{1}}k)_i\delta^is^{\alpha i}}{i!\,\Gamma(\alpha i-\beta_1-\beta_1k)}ds.$$
We apply the formula \eqref{eq2.21} to the sums with respect to $i$ and $j$ and perform the substitution $s=tz$:
$$\sum\limits_{n=-\infty }^{+\infty }{\sum\limits_{k=0}^{+\infty }{\frac{{{\left( -1 \right)}^{k}}\left[ {{\left| x-\xi+2na \right|}^{k}}-{{\left| x+\xi+2na \right|}^{k}} \right]}{2\Gamma(k+1)}}}\times$$
$$\times \sum_{j=0}^{+\infty}\sum_{i=0}^{j}\frac{(-\gamma)_i({-{\gamma }_{1}}-{{\gamma }_{1}}k)_{j-i}\delta^j}{i!(j-i)!\Gamma(\alpha i+1-\beta)\Gamma(\alpha j -\alpha i-\beta_1-\beta_1k)}\int\limits_{0}^{t}{(t-s)^{\alpha i-\beta} s^{\alpha j-\alpha i-{{\beta }_{1}}-\beta_1k-1}ds}=$$
$$=\sum\limits_{n=-\infty }^{+\infty }{\sum\limits_{k=0}^{+\infty }{\frac{{{\left( -1 \right)}^{k}}\left[ {{\left| x-\xi+2na \right|}^{k}}-{{\left| x+\xi+2na \right|}^{k}} \right]}{2\Gamma(k+1)}}} \sum_{j=0}^{+\infty}\sum_{i=0}^{j}\frac{(-\gamma)_j({-{\gamma }_{1}}-{{\gamma }_{1}}k)_{j-i}\delta^jt^{\alpha j-\beta-\beta_1-\beta_{1}k}}{i!(j-i)!\Gamma(\alpha j -\beta_1-\beta_1k-\beta+1)}.$$
We use \eqref{eq2.22} and express the obtained result in terms of the generalized Mittag–Leffler function:
$$\sum\limits_{n=-\infty }^{+\infty }{\sum\limits_{k=0}^{+\infty }{\frac{{{\left( -1 \right)}^{k}}\left[ {{\left| x-\xi+2na \right|}^{k}}-{{\left| x+\xi+2na \right|}^{k}} \right]}{2\Gamma(k+1)}}}t^{-\beta-\beta_1-\beta_{1}k}\sum_{j=0}^{+\infty}\frac{({-\gamma-{\gamma }_{1}}-{{\gamma }_{1}}k)_{j}\delta^j t^{\alpha j}}{j!\Gamma(\alpha j -\beta_1-\beta_1k-\beta+1)}=$$
$$=\sum\limits_{n=-\infty }^{+\infty }{\sum\limits_{k=0}^{+\infty }{\frac{{{\left( -1 \right)}^{k}}\left[ {{\left| x-\xi+2na \right|}^{k}}-{{\left| x+\xi+2na \right|}^{k}} \right]}{2\Gamma(k+1)}}}t^{-\beta-\beta_1-\beta_{1}k}E_{\alpha, {1-\beta_1k-\beta_1-\beta}}^{-\gamma-\gamma _{1}-{{\gamma }_{1}}k}\left[\delta t^\alpha\right].$$

\subsection{Appendix A4.}

Let us evaluate $\widetilde G_{xx}$ as follows:
$$\frac{\partial^2}{\partial x^2}\widetilde G\left( t,x,0,\xi \right)=$$
$$=\frac{\partial^2}{\partial x^2}\left(\frac{{{ t }^{{{-\beta }_{1}}}}}{2}\sum\limits_{n=-\infty }^{\infty }{\left[ {{E}_{12}}\left( \left. \begin{matrix}
		-{{\gamma }_{1}},1,{{-\gamma }_{1}};\,\,\,\,\,\,\,\,\,\,\,\,\,\,\,\,\,\,\,\,\,\,\,\,\,\,\,  \\
		-{{\beta }_{1}},\alpha ,{1-{\beta }_{1}};-{{\gamma }_{1}},{{-\gamma }_{1}};1,1;1,1  \\
	\end{matrix} \right|\begin{matrix}
		-\left| x-\xi+2an \right|{{t}^{-{{\beta }_{1}}}}  \\
		\delta {{t}^{\alpha }}  \\
	\end{matrix} \right) \right.}-\right.$$
$$\left.\left. -{{E}_{12}}\left( \left. \begin{matrix}
	-{{\gamma }_{1}},1,{{-\gamma }_{1}};\,\,\,\,\,\,\,\,\,\,\,\,\,\,\,\,\,\,\,\,\,\,\,\,\,\,\,  \\
	-{{\beta }_{1}},\alpha ,{1-{\beta }_{1}};-{{\gamma }_{1}},{{-\gamma }_{1}};1,1;1,1  \\
\end{matrix} \right|\begin{matrix}
	-\left| x+\xi+2an \right|{{t}^{-{{\beta }_{1}}}}  \\
	\delta {{t}^{\alpha }}  \\
\end{matrix} \right) \right]\right)=$$
$$=\frac{\partial^2}{\partial x^2}\left( \sum\limits_{n=-\infty }^{+\infty }{\sum\limits_{k=0}^{+\infty }{\frac{{{\left( -1 \right)}^{k}}\left[ {{\left| x-\xi+2na \right|}^{k}}-{{\left| x+\xi+2na \right|}^{k}} \right]}{2\Gamma(k+1)}}}t^{-{{\beta }_{1}}-\beta_1k}E_{\alpha, 1-\beta_1-\beta_1k}^{-{\gamma }_{1}-{{\gamma }_{1}}k}\left[\delta t^\alpha\right]\right)=$$
$$=\frac{\partial^2}{\partial x^2}\left( \sum\limits_{n=-\infty }^{-1 }{\sum\limits_{k=0}^{+\infty }{\frac{\left[ {{( x-\xi+2na)}^{k}}-{{(x+\xi+2na)}^{k}} \right]}{2\Gamma(k+1)}}}t^{-{{\beta }_{1}}-\beta_1k}E_{\alpha, 1-\beta_1-\beta_1k}^{-{\gamma }_{1}-{{\gamma }_{1}}k}\left[\delta t^\alpha\right]\right)+$$
$$+\frac{\partial^2}{\partial x^2}\left( {\sum\limits_{k=0}^{+\infty }{\frac{{(-1)^k( x-\xi)}^{k}}{2\Gamma(k+1)}}}t^{-{{\beta }_{1}}-\beta_1k}E_{\alpha, 1-\beta_1-\beta_1k}^{-{\gamma }_{1}-{{\gamma }_{1}}k}\left[\delta t^\alpha\right]+\right.{\sum\limits_{k=0}^{+\infty }{\frac{{(-1)^k( \xi-x)}^{k}}{2\Gamma(k+1)}}}t^{-{{\beta }_{1}}-\beta_1k}E_{\alpha, 1-\beta_1-\beta_1k}^{-{\gamma }_{1}-{{\gamma }_{1}}k}\left[\delta t^\alpha\right]-$$
$$-\left.{\sum\limits_{k=0}^{+\infty }{\frac{{(-1)^k( x+\xi)}^{k}}{2\Gamma(k+1)}}}t^{-{{\beta }_{1}}-\beta_1k}E_{\alpha, 1-\beta_1-\beta_1k}^{-{\gamma }_{1}-{{\gamma }_{1}}k}\left[\delta t^\alpha\right] \right)+$$
$$+\frac{\partial^2}{\partial x^2}\left( \sum\limits_{n=1 }^{+\infty }{\sum\limits_{k=0}^{+\infty }{\frac{(-1)^k\left[ {{( x-\xi+2na)}^{k}}-{{(x+\xi+2na)}^{k}} \right]}{2\Gamma(k+1)}}}t^{-{{\beta }_{1}}-\beta_1k}E_{\alpha, 1-\beta_1-\beta_1k}^{-{\gamma }_{1}-{{\gamma }_{1}}k}\left[\delta t^\alpha\right]\right)=$$
$$=\sum\limits_{n=-\infty }^{+\infty }{\sum\limits_{k=0}^{+\infty }{\frac{{{\left( -1 \right)}^{k}}\left[ {{\left| x-\xi+2na \right|}^{k}}-{{\left| x+\xi+2na \right|}^{k}} \right]}{2\Gamma(k+1)}}}t^{-\beta-\beta_1-\beta_{1}k}E_{\alpha, {1-\beta_1k-\beta_1-\beta}}^{-\gamma-\gamma _{1}-{{\gamma }_{1}}k}\left[\delta t^\alpha\right].$$

\bibliographystyle{plain}

\begin{thebibliography}{99}
\normalsize

\bibitem{Sarah}
S. Aljohani, M. Rashid, A. Kalsoom, N. Mlaiki.
Exploring the influence of generalized kernels on Green’s function in fractional differential equations.
{\it Int. J. Anal. Appl.} \textbf{22} (2024), Paper No. 188.


\bibitem{Al‑Refai}
M. Al-Refai, A. Nusseir, S. Al-Sharif.
Maximum principles for fractional differential inequalities with Prabhakar derivative and their applications.
{\it Fractal Fract.} \textbf{6} (2022), no. 10, Paper No. 612.

\bibitem{Asjad}
M.I. Asjad, M. Zahid, Y.-M. Chu, D. Baleanu.
Prabhakar fractional derivative and its applications in the transport phenomena containing nanoparticles.
{\it Thermal Science}, {\bf 2} (2021), pp.411-416.

\bibitem{El-Sayed}
El-Sayed, A.A., Boulaaras, S., Al-Kharousi, F.A.:
Dickson polynomial-based solutions for fractional order physics problems.
{J. Inequal. Appl.}, 118:118 (2025).
DOI: 10.1186/s13660-025-03367-7

\bibitem{Fernandez}
Fernandez, A., Restrepo, J.E., Suragan, D.:
Prabhakar-type linear differential equations with variable coefficients.
{Differ. Integral Equ.} \textbf{35}(9--10), 581--610 (2022).
DOI: 10.57262/die035-0910-581

\bibitem{Garra}
Garra, R., Gorenflo, R., Polito, F., Tomovski, Z.:
Hilfer–Prabhakar derivatives and some applications.
{Appl. Math. Comput.} \textbf{242}, 576--589 (2014).
DOI: 10.1016/j.amc.2014.05.129

\bibitem{Giusti}
Giusti, A., Colombaro, I., Garra, R., et al.:
A practical guide to Prabhakar fractional calculus.
{Fract. Calc. Appl. Anal.} \textbf{23}(1), 9--54 (2020).
DOI: 10.1515/fca-2020-0034

\bibitem{Tom2} Gorska, K., Pietrzak, T., Sandev, T.,  Tomovski, Z.: Volterra-Prabhakar derivative of distributed order and some applications. Journal of Computational and Applied Mathematics, 433. (2023)

\bibitem{Isah} Isah, S. S., Fernandez, A.,  Özarslan, M. A.: On bivariate fractional calculus with general univariate analytic kernels. Chaos, Solitons \& Fractals, 171 (2023).

\bibitem{Karimov}
Karimov, E., Hasanov, A.:
On a boundary-value problem in a bounded domain for a time-fractional diffusion equation with the Prabhakar fractional derivative.
{Bull. Karaganda Univ. Math. Ser.} \textbf{111}(3), 39--46 (2023).
DOI: 10.31489/2023m3/39-46

\bibitem{Karimov2}
Karimov, E., Kerbal, S., Turdiev, K.:
Direct and inverse problems with a dynamical condition for the sub-diffusion equation involving the Hilfer-Prabhakar integral-differential operator.
Rendiconti del Circolo Matematico di Palermo, II. Ser., 75:35 (2026).
DOI: 10.1007/s12215-025-01351-0

\bibitem{Karimov3} 
Karimov~E., Usmonov~D., Mirzaeva~M.;
Green’s Function and Solution Representation for a Boundary Value Problem Involving the Prabhakar Fractional Derivative.
Preprint, 	arXiv:2512.21259 [math.AP], 2026.

\bibitem{Kerbal}
Kerbal, S., Khasanov, Sh.:
On Katugampola-Prabhakar Fractional Integral-Differential Operators.
Gulf Journal of Mathematics, 20(1), 190–221 (2025).
DOI: https://doi.org/10.56947/gjom.v20i.2856

\bibitem{jonibek} Khujakulov, J. R.:  Initial-boundary value problem for a time fractional 
differential equation with the Prabhakar derivative on a star graph.  Bulletin of the 
Institute of Mathematics, 6(2), 20-30 (2023).

\bibitem{Kilbas}
Kilbas, A.A., Srivastava, H.M., Trujillo, J.J.:
Theory and Applications of Fractional Differential Equations.
Elsevier, Amsterdam (2006).

\bibitem{Knopp}
Knopp, K.:
Theory and application of infinite series.
Dover Publications, New York (1990).


\bibitem{Magar}
Magar, S.K., Dole, P.V., Ghadle, K.P.:
Prabhakar and Hilfer–Prabhakar fractional derivatives in the setting of $\Psi$-fractional calculus and its applications.
{Kragujevac J. Math.} \textbf{48}(4), 515--533 (2024).
DOI: 10.46793/KgJMat2404.515M

\bibitem{Mamanazarov}
Mamanazarov, A., Khalilov, K., Kodiraliev, A.:
Direct and inverse source problems with non-local boundary conditions for a time-fractional time and space degenerate heat equation.
Rendiconti del Circolo Matematico di Palermo Series 2, 74:162 (2025).
DOI: 10.1007/s12215-025-01286-6

\bibitem{Odibat}
Odibat, Z., Momani, S.M.:
Fractional Green's function for fractional partial differential equations.
{J. Eur. Syst. Autom.} \textbf{42}(6--8), 639--651 (2008).
DOI: 10.3166/jesa.42.639-651

\bibitem{Podlubny}
Podlubny, I.:
Fractional Differential Equations.
Academic Press, San Diego (1999).

\bibitem{Tom1} Polito, F.,  Tomovski, Z.: (2016). Some properties of Prabhakar-type fractional calculus operators. Fractional Differential Calculus, 6(1), 73--94 (2016).

\bibitem{Prabhakar}
Prabhakar, T.R.:
A singular integral equation with a generalized Mittag-Leffler function in the kernel.
{Yokohama Math. J.} \textbf{19}, 7--15 (1971).

\bibitem{Prudnikov}
Prudnikov, A.P., Brychkov, Yu.A., Marichev, O.I.:
Integrals and Series: Special Functions.
Nauka, Moscow (1983).

\bibitem{Pskhu}
Pskhu, A.V.:
Green functions of the first boundary-value problem for a fractional diffusion–wave equation in multidimensional domains.
{Mathematics} \textbf{8}(4), 464 (2020).
DOI: 10.3390/math8040464

\bibitem{Pskhu 2}
Pskhu, A.V.:
Uravneniya v chastnykh proizvodnykh drobnogo poryadka.
Nauka, Moscow (2005).

\bibitem{Noosha} Rani, N.,  Fernandez, A.: Mikusiński's operational calculus for Prabhakar fractional calculus. Integral Transforms and Special Functions, {\bf 33}, 12, 945--965 (2022).

\bibitem{Suzuki}
Suzuki, J., Zayernouri, M., D’Elia, M.:
A survey of fractional-order models in transport and anomalous materials (Technical Report).
DOE Tech. Rep. SAND2021-11291R (2021).
DOI: 10.2172/1820001

\bibitem{Turdiev 1}
Turdiev, Kh.:
Nonlocal problem for a diffusion–wave equation involving regularized Prabhakar fractional derivative.
{Bull. Inst. Math.} \textbf{6}(6), 39--45 (2023).

\bibitem{Turdiev 2}
Karimov, E., Turdiev, Kh., \& Usmonov, D.: Fractional generalization of hyperbolic-type equation and bivariate, trivariate Mittag-Leffler type functions. {Arab. J. Math.,} 1-18 (2026). 
\href{https://doi.org/10.1007/s40065-025-00603-2}{https://doi.org/10.1007/s40065-025-00603-2}9

\bibitem{Usmonov}
Usmonov, D., Mirzaeva, M.:
A Cauchy problem for the sub-diffusion equation with the Prabhakar fractional derivative.
{Gulf J. Math.} \textbf{21}(2), 181--203 (2025).
DOI: 10.56947/gjom.v21i2.3708

\bibitem{WMT} Wang, W.,  Metzler, R. Tomovski, Ž.:
Distributed-order fractional diffusion equation with Hilfer–Prabhakar fractional derivative.
Physica A: Statistical Mechanics and its Applications, {\bf 689}, 131370 (2026).

\end{thebibliography}

\end{document}